\newtheorem{theorem}{Theorem}[section]
\newtheorem{lemma}[theorem]{Lemma}
\newtheorem{claim}[theorem]{Claim}
\newtheorem{proposition}[theorem]{Proposition}
\theoremstyle{definition}
\newtheorem{definition}[theorem]{Definition}
\newtheorem{example}[theorem]{Example}
\newtheorem{remark}[theorem]{Remark}
\newtheorem{question}[theorem]{Question}
\newcommand\R{\mathbb{R}}
\newcommand\Z{\mathbb{Z}}
\newcommand{\I}{\mathbb{I}}
\newcommand{\X}{\mathbf{X}}
\newcommand{\N}{\mathcal{N}}
\newcommand{\bN}{\mathbb{N}}
\newcommand{\K}{\mathcal{K}}
\newcommand{\bs}{\mathbb{S}}
\newcommand{\mes}{\textrm{mes}}
\newcommand{\lk}{\mathrm{lk}}
\newcommand{\st}{\mathrm{st}}
\newcommand{\susp}{\Sigma\,}
\newcommand{\cU}{\mathcal{U}}
\begin{document}

\title{{\v C}ech complexes of hypercube graphs}
\author{Henry Adams}
\address{University of Florida, Gainesville}
\email{henry.adams@ufl.edu}
\author{Samir Shukla}
\address{Indian Institute of Technology Mandi, India}
\email{samir@iitmandi.ac.in}
\author{Anurag Singh}
\address{Indian Institute of Technology Bhilai, India}
\email{anurags@iitbhilai.ac.in}
\date{\today}
\maketitle

\begin{abstract}
A \v{C}ech complex of a finite simple graph $G$ is a nerve complex of balls in the graph, with one ball centered at each vertex.
More precisely, the \v{C}ech complex $\N(G,r)$ is the nerve of all closed balls of radius $\frac{r}{2}$ centered at vertices of $G$, where these balls are drawn in the geometric realization of the graph $G$ (equipped with the shortest path metric).
The simplicial complex $\N(G,r)$ is equal to the graph $G$ when $r=1$, and homotopy equivalent to the graph $G$ when $r$ is smaller than half the length of the shortest cycle in $G$.
For higher values of $r$, the topology of $\N(G,r)$ is not well-understood.
We consider the $n$-dimensional hypercube graphs $\I_n$ with $2^n$ vertices.
Our main results are as follows.
First, when $r=2$, we show that the \v{C}ech complex $\N(\I_n,2)$ is homotopy equivalent to a wedge of 2-spheres for all $n\ge 1$, and we count the number of 2-spheres appearing in this wedge sum.
Second, when $r=3$, we show that $\N(\I_n,3)$ is homotopy equivalent to a simplicial complex of dimension at most 4, and that for $n\ge 4$ the reduced homology of $\N(\I_n, 3)$ is nonzero in dimensions 3 and 4, and zero in all other dimensions.
Finally, we show that for all $n\ge 1$ and $r\ge 0$, the inclusion $\N(\I_n, r)\hookrightarrow \N(\I_n, r+2)$ is null-homotopic, providing a bound on the length of bars in the persistent homology of \v{C}ech complexes of hypercube graphs.
\end{abstract}
\medskip

\noindent {\bf Keywords} : {\v C}ech complex, persistent homology, collapsibility, hypercube

\noindent {\bf 2020 Mathematics Subject Classification:} 55N31, 55U10, 05E45

\medskip 

\section{Introduction}

Let $G=(V(G),E(G))$ be a finite simple connected graph.
We can equip the vertex set $V(G)$ with the shortest path metric, which extends to give a shortest path metric on the geometric realization of $G$, in which the realization of each edge is isometric to the unit interval $[0,1]$.
The \v{C}ech simplicial complex $\N(G,r)$ is the nerve of all closed balls of radius $\frac{r}{2}$ centered at vertices of $G$, where these balls are drawn in the geometric realization of the graph $G$; see Figure~\ref{fig:balls}.
In other words, the vertex set of $\N(G,r)$ is $V(G)$, and a set of vertices forms a simplex if their corresponding balls have a point of intersection.
%For a precise definition of $\N(G,r)$, see \Cref{defn:cech}.

\begin{figure}[htb]
\centering
\includegraphics[width=5in]{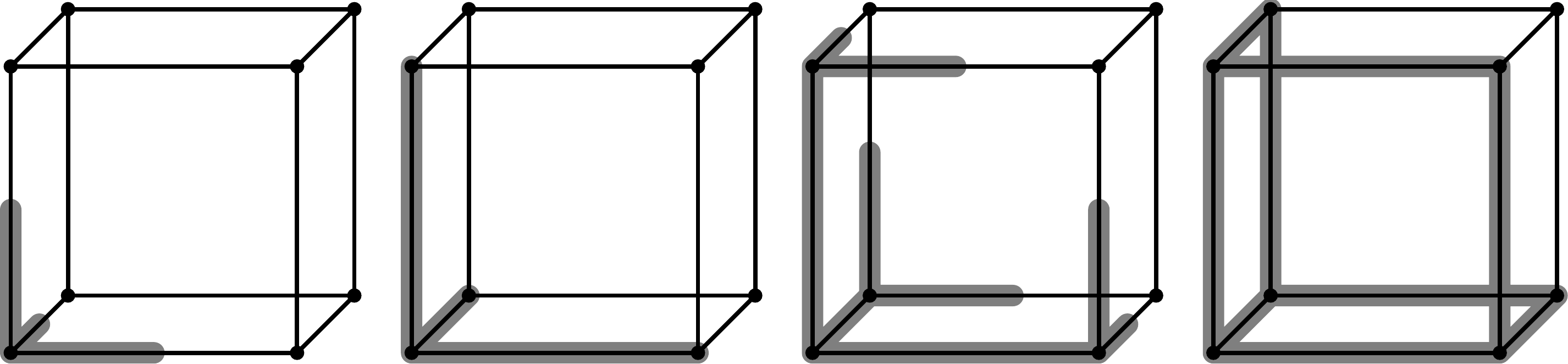}
\caption{Balls of radius $\frac{r}{2}$ centered at a vertex $v$ in the geometric realization of the 3-dimensional hypercube graph, for $r=1, 2, 3, 4$.}
\label{fig:balls}
\end{figure}

When $r=0$, the \v{C}ech simplicial complex $\N(G,0)$ is $V(G)$, a disjoint union of vertices.
When $r=1$, the complex $\N(G,1)$ is equal to the graph $G$.
If $\ell$ is the length of the shortest cycle in the graph $G$, and if $1\le r<\frac{\ell}{2}$, then any intersection of balls of radius $\frac{r}{2}$ is either empty or contractible, and hence the nerve lemma applies to guarantee that $\N(G,r)$ is homotopy equivalent to the geometric realization of $G$ (\Cref{lem:r-small-nerve}).
But for larger values of $r$, the hypotheses of the nerve lemma are no longer satisfied, and the topology of $\N(G,r)$ is not well-understood.

In applied and computational topology, \v{C}ech complexes are frequently-used tools to approximate the ``shape'' of a dataset~\cite{Carlsson2009}.
In the context of this paper, the dataset is the vertex set of a graph, which is a common source of data when modeling social networks, transportation schedules, the routing of messages in a communication network, or the structure of molecules.
More generally, one is given a finite dataset $X$ sampled, perhaps with noise, from some underlying space $M$, which may be a manifold, or a stratified space, or a graph.
One would like to approximate topological information about $M$ using only the sampling $X$.
The idea of persistent homology~\cite{edelsbrunner2000topological,zomorodian2005computing,EdelsbrunnerHarer} is to ``thicken'' $X$ depending on some scale parameter $r\ge 0$, and to measure how the topology of these thickenings changes as the scale $r$ increases.
Example choices of thickenings are the Vietoris--Rips complexes~\cite{Vietoris27}, witness complexes~\cite{DeSilvaCarlsson}, alpha complexes~\cite{edelsbrunner1994three,akkiraju1995alpha}, or \v{C}ech complexes~\cite{EdelsbrunnerHarer} of $X$.
In the case of \v{C}ech complexes, there are multiple options: one defines the \v{C}ech complex as the nerve of all balls with centers in $X$, where those balls are drawn in either $X$, or in $M$, or in $\R^m$ (if $M$ happens to be contained in some Euclidean space $\R^m$).
There are a wide variety of reconstruction results for several different types of thickenings, which give theoretical or probabilistic guarantees for being able to recover the homology groups or the homotopy type of the underlying space $M$ from these thickenings of $X$~\cite{guibas2008reconstruction,Hausmann1995,Latschev2001,niyogi2008finding}.
The most standard such reconstruction result is the nerve lemma for \v{C}ech complexes~\cite{Alexandroff1928,bjorner,Borsuk1948,Hatcher}.
These reconstruction results, however, typically require the scale parameter $r$ to be sufficiently small compared to the (unknown) geometric properties of the underlying space $M$, such as curvature if $M$ is a manifold, or such as the reach if $M$ is embedded in Euclidean space.
In accordance with the idea of persistent homology, data science practitioners increase the scale $r\ge 0$ to values much larger than these regimes, in order to see which topological features persist.
There is hence a need to understand these thickenings when the aforementioned reconstruction results no longer apply, including the case of \v{C}ech complexes when the balls are large enough so that the nerve lemma no longer applies.

\v{C}ech complexes are closely related to Vietoris--Rips complexes, and so we provide a brief introduction to the similarities and differences between these constructions.
Because they are constructed as nerve complexes, \v{C}ech complexes sometimes satisfy the hypotheses of the nerve theorem~\cite{Borsuk1948}, in which case the \v{C}ech complex is homotopy equivalent to a union of the balls.
Though Vietoris--Rips complexes are less likely to satisfy a nerve theorem, they are still equipped with reconstruction guarantees when the scale parameter is not too large~\cite{Hausmann1995,Latschev2001}.
Furthermore, in part because Vietoris--Rips complexes are \emph{clique} or \emph{flag} complexes, their persistent homology can be efficiently computed~\cite{bauer2021ripser}.
In many ways \v{C}ech and Vietoris--Rips complexes have similar behavior.
Indeed, their filtrations are multiplicatively interleaved~\cite{Carlsson2009}, and they both satisfy stability results when the underlying dataset is perturbed by a controllable amount with respect to the Gromov--Hausdorff distance~\cite{ChazalDeSilvaOudot2014,chazal2009gromov}.
This means that one could study the \v{C}ech or Vietoris--Rips persistent homology of manifolds by studying the \v{C}ech or Vietoris--Rips complexes of graphs that approximate that manifold.
Indeed, this was the approach taken in~\cite{AA-VRS1} when studying the \v{C}ech and Vietoris--Rips persistent homology of the circle.
For $G$ a finite connected graph, it is in general not easy to produce a list of the maximal simplices in a Vietoris--Rips complex of $G$.
However, this is often easier to do for \v{C}ech complexes (see for example \Cref{lem:maximal-simplices} in the case of hypercube graphs), and this is one reason why we consider \v{C}ech complexes here.

The papers~\cite{carlsson2020persistent,adamaszek2022vietoris,shukla2022vietoris,feng2023vietoris,feng2023homotopy,adams2023lower} study the Vietoris--Rips complexes of hypercube graphs, uncovering some structure when the scale parameter is small or when the dimension of homology is small, though many questions remain unanswered.
Questions about the shape of Vietoris--Rips complexes of hypercube graphs originally arose from work by mathematical biologists who were applying topology in order to study genetic trees, medial recombination, and reticulate evolution~\cite{emmett2015quantifying,emmett2016topology,camara2016topological,chan2013topology}.
In this paper, we consider analogous questions about \v{C}ech complexes of hypercube graphs.
More is known about the Vietoris--Rips and \v{C}ech complexes of cycle graphs, which are always homotopy equivalent to either a single odd-dimensional sphere or to a wedge sum of even-dimensional spheres of the same dimension~\cite{Adamaszek2013,AAFPP-J}.
The persistent homology of Vietoris--Rips complexes of finite connected graphs is studied in~\cite{adams2022geometric} (see also~\cite{lim2020vietoris}), but less is known about the persistent homology of \v{C}ech complexes of graphs.
The papers~\cite{gasparovic2018complete,virk20201} study the 1-dimensional persistence of \v{C}ech and Vietoris--Rips complexes of a different flavor, which have an uncountable number of vertices (one for each point in a metric graph or geodesic space), instead of the simplicial complexes with a finite number of vertices that we study here.

In this paper we consider the \v{C}ech complexes of $n$-dimensional hypercube graphs $\I_n$ with $2^n$ vertices.
Our main results are as follows.
When $r=2$, we show in \Cref{thm:main_for_index_2} that for all $n\ge 1$, the \v{C}ech complex $\N(\I_n,2)$ is homotopy equivalent to a $2^{n-2}(n^2-3n+4)$-fold wedge sum of 2-spheres.
When $r=3$, we show in \Cref{thm:collapsibility} that $\N(\I_n,3)$ is homotopy equivalent to a simplicial complex of dimension at most 4, and we show in \Cref{thm:main3} that for $n\ge 4$ the reduced homology of $\N(\I_n, 3)$ is nonzero in dimensions 3 and 4, and zero in all other dimensions.
Finally, in \Cref{thm:persistence} we show that for all $n\ge 1$ and $r\ge 0$, the inclusion $\N(\I_n, r)\hookrightarrow \N(\I_n, r+2)$ is null-homotopic, providing a bound on the length of bars in the \v{C}ech persistent homology of hypercube graphs.

We begin with preliminaries in \Cref{sec:preliminaries}, and a description of the \v{C}ech simplicial complexes $\N(\I_n, r)$ of hypergraphs in \Cref{sec:nerve-hypercube}.
In \Cref{sec:r=2} we study the case $r=2$, and in \Cref{sec:r=3} we consider the case $r=3$.
We analyze the persistent homology of the filtration $\N(\I_n, -)$ in \Cref{sec:persistence}, and we conclude by sharing some open questions in \Cref{sec:conclusion}.

\section{Preliminaries}
\label{sec:preliminaries}

We give preliminaries on (nerve) simplicial complexes, graphs, \v{C}ech complexes of graphs, hypercube graphs, and results about homotopy types of simplicial complexes.

\subsection*{Simplicial complexes and nerves}

A simplicial complex $\K$ on a vertex set $V$ is a family of subsets of $V$, including all singletons, such that if $\sigma\in \K$ and $\tau \subseteq \sigma$, then $\tau\in \K$.
We identify a simplicial complex with its geometric realization, which is a topological space.

Let $\cU=\{U_i\}_{i\in I}$ be a cover of a topological space $X$.
The \emph{nerve simplicial complex} $\mathbf{N}(\cU) = \mathbf{N}(\{U_i\})$ has $I$ as its vertex set, and has a finite subset $\sigma\subseteq I$ as a simplex if $\cap_{i\in \sigma}U_i\neq\emptyset$.
If each $U_i$ is contractible, and if each nonempty intersection $\cap_{i\in \sigma}U_i$ is contractible, then we say that the cover $\cU$ is a \emph{good cover}.
The nerve theorem provides relatively mild point-set topology assumptions so that if $\cU$ is a good cover of $X$, then the nerve $\mathbf{N}(\cU)$ is homotopy equivalent to the space $X$.
This theorem applies, for example, if $\cU$ is an open cover of a paracompact space, or if $\cU$ is a cover of a simplicial complex by subcomplexes~\cite{Borsuk1948,Hatcher,bjorner} (see \Cref{thm:nerve}).

\subsection*{Graphs}

Let $G=(V(G),E(G))$ be  a finite simple connected graph.
If two vertices $v$ and $w$ are adjacent, then we denote this by $v \sim w$.
%Let $G$ be a graph on vertex set $V$ and edge set $E$.
For $S \subseteq V(G)$, the {\it induced subgraph} $G[S]$ is a subgraph of $G$, with  vertex set $S$ and  edge set  
$E(G) \cap {S \choose 2}$.

For $v \in V(G)$, the \emph{(open) neighborhood} of $v$ is $N_G(v) = \{w ~ | ~ w \sim v\}$.
When the graph $G$ is clear from context, we often simplify notation and write $N(v)$.
We equip the vertex set $V(G)$ with the shortest path metric $d\colon V(G)\times V(G)\to \mathbb{R}$.

\begin{definition}
\label{defn:neighborhood}
For a vertex $v\in V(G)$ and $r\ge 0$, the \emph{(closed) $r$-neighborhood} about $v$ is
\[N_{G,r}[v]=\{w\in V(G)~|~d(v,w)\le r\}.\]
We let $N_{G}[v]\coloneqq N_{G,1}[v]$ denote the closed $1$-neighborhood.
When the graph $G$ is clear from context, we often simplify notation and write $N_r[v]$.
\end{definition}

We emphasize that $v$ is not an element of the open neighborhood $N_G(v)$, but $v$ is an element of the closed neighborhood $N_r[v]$.
Furthermore, $N_1[v]=N(v)\cup\{v\}$.

\subsection*{\v{C}ech complexes of graph}

%\begin{definition}
%\label{defn:cech}
For $G$ a finite simple connected graph and $r\ge 0$ a real number, the \emph{\v{C}ech simplicial complex $\N(G,r)$} is the nerve of all closed balls of radius $\frac{r}{2}$ centered at vertices of $G$.
These balls are drawn in the geometric realization of $G$ equipped with the shortest path metric, in which the realization of each edge is isometric to the unit interval $[0,1]$.
%\end{definition}

We note that $\N(G,r)=\N(G,\lfloor r\rfloor)$ for all real numbers $r\ge 0$.
Therefore, it suffices to consider only the \v{C}ech complexes when the scale $r$ is an integer; we give an equivalent definition of the \v{C}ech complex in this case.

\begin{lemma}
\label{lem:cech}
For $G$ a finite simple connected graph and integer $r\ge 0$, the \emph{\v{C}ech simplicial complex $\N(G,r)$} can equivalently be defined as follows.
The vertex set of $\N(G,r)$ is $V(G)$.
If $r$ is even then the simplices of $\N(G,r)$ are generated by the closed neighborhoods $N_{\frac{r}{2}}[v]$ for $v\in V(G)$.
If $r$ is odd then the simplices of $\N(G,r)$ are generated by the sets $N_{\frac{r-1}{2}}[v]\cup N_{\frac{r-1}{2}}[w]$ for $(v,w)\in E(G)$.
\end{lemma}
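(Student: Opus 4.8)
The plan is to unwind the definition of the nerve and reduce everything to elementary geometry of the metric graph $|G|$. By definition, a finite set $\sigma\subseteq V(G)$ is a simplex of $\N(G,r)$ precisely when the closed balls $\bar B(v,\tfrac r2)$, for $v\in\sigma$, share a common point in the geometric realization $|G|$, and the vertex set of the nerve is $V(G)$; so the statement reduces to determining, for each parity of $r$, exactly which $\sigma$ admit such a common point. The one geometric input I would record first is the distance formula in $|G|$: every point $x\in|G|$ is either a vertex, or lies in the relative interior of a unique edge $\{u,w\}$ at some parameter $t\in(0,1)$ (at distance $t$ from $u$ and $1-t$ from $w$); and in the latter case, since any path from $x$ to a vertex $z$ must exit the open edge through $u$ or $w$, one has $d(x,z)=\min\bigl(t+d(u,z),\,(1-t)+d(w,z)\bigr)$. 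Together with the fact that $d$ is integer-valued on $V(G)\times V(G)$, this is all that is needed.

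For $r$ even, write $m=\tfrac r2\in\Z$. If $\sigma$ has a common point $x$ that is a vertex $z$, then $d(z,v)\le m$ for all $v\in\sigma$, i.e.\ $\sigma\subseteq N_m[z]$. If instead $x$ lies in the interior of an edge $\{u,w\}$ at parameter $t$, then for each $v\in\sigma$ either $t+d(u,v)\le m$, forcing $d(u,v)<m$, or $(1-t)+d(w,v)\le m$, forcing $d(u,v)\le 1+d(w,v)\le m+t<m+1$ and hence $d(u,v)\le m$ by integrality; either way $\sigma\subseteq N_m[u]$. So a common point always yields $\sigma\subseteq N_m[z]$ for some vertex $z$, and conversely if $\sigma\subseteq N_m[z]$ then $z$ itself lies in every $\bar B(v,\tfrac r2)$. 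Hence $\N(G,r)$ is exactly the complex generated by $\{N_m[z]:z\in V(G)\}$.

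For $r$ odd, write $r=2k+1$, so $\tfrac r2=k+\tfrac12$ and $\tfrac{r-1}{2}=k$. A parallel argument works: if $\sigma$ has a common point in the interior of an edge $\{u,w\}$, then using $t>0$, $1-t>0$ and integrality, each $v\in\sigma$ lies in $N_k[u]$ or in $N_k[w]$, so $\sigma\subseteq N_k[u]\cup N_k[w]$; and if the common point is a vertex $z$, then $\sigma\subseteq N_k[z]\subseteq N_k[z]\cup N_k[z']$ for any neighbor $z'$ of $z$ (which exists since $G$ is connected with at least two vertices; the one-vertex case is trivial). Conversely, given an edge $\{u,w\}$ with $\sigma\subseteq N_k[u]\cup N_k[w]$, the midpoint of that edge is at distance $\le k+\tfrac12=\tfrac r2$ from every vertex of $N_k[u]\cup N_k[w]$, hence lies in $\bar B(v,\tfrac r2)$ for every $v\in\sigma$. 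Thus $\N(G,r)$ is exactly the complex generated by the sets $N_k[v]\cup N_k[w]$ over $(v,w)\in E(G)$.

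The only real subtlety — the main obstacle in the sense of bookkeeping rather than ideas — is the interplay between the half-integer radius $\tfrac r2$ and the integrality of the graph metric, which is what makes the two parities behave differently, together with remembering to treat a vertex common point as a degenerate case of an edge common point in the odd case (this is exactly why the odd-case generators are indexed by edges rather than by vertices). Everything else is a direct verification of the two inclusions for each parity.
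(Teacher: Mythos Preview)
Your proof is correct and follows essentially the same approach as the paper's: reduce the nerve condition to the question of which vertex or edge-midpoint can serve as a common point of the balls, using integrality of the graph metric to handle the half-integer radius. The paper's proof simply asserts the key reduction (``any collection of closed balls with nonempty intersection also intersect at a vertex / at the midpoint of an edge'') without justification, whereas you supply that justification via the explicit distance formula $d(x,z)=\min(t+d(u,z),(1-t)+d(w,z))$ and the integrality argument; so your version is a more detailed execution of the same idea.
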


\begin{proof}
%We observe that Definition~\ref{defn:cech} recovers the definition of the \v{C}ech complex $\N(G,r)$ from the introduction as the nerve of all closed balls of radius $\frac{r}{2}$ centered at vertices of $G$.
All balls in this proof are centered at vertices.
Note that if $r$ is even, then any collection of closed balls of radius $\frac{r}{2}$ with nonempty intersection also intersect at a vertex.
And, a collection of such balls intersect at the vertex $v$ if and only if their center vertices are all contained in the neighborhood $N_\frac{r}{2}[v]$.
Note that if $r$ is odd, then any collection of closed balls of radius $\frac{r}{2}$ with nonempty intersection also intersect at the midpoint of an edge (in the geometric realization of $G$).
And, a collection of such balls intersect at the midpoint of the edge $(v,w)$ if and only if their center vertices are each contained in the union of neighborhoods $N_{\frac{r-1}{2}}[v]\cup N_{\frac{r-1}{2}}[w]$.
\end{proof}

A benefit of Lemma~\ref{lem:cech} is that it gives an explicit description of simplices in the \v{C}ech simplicial complex of a graph: any simplex in $\N(G,r)$ is either a vertex or a subset of one of the generating simplices.

Two early papers on the \v{C}ech complexes of graphs include~\cite{Previte2014}, whose \emph{$D$-neighborhood complexes} recover $\N(G,r)$ when $r$ is even and $D=\{0,1,\ldots,\frac{r}{2}\}$, and~\cite{AAFPP-J} which gives a complete characterization of the homotopy types of $\N(G,r)$ for all $r$ when $G$ is a cycle graph.

The following lemma describes when the \v{C}ech complex $\N(G,r)$ recovers the graph $G$ up to homotopy type.

\begin{lemma}
\label{lem:r-small-nerve}
If $\ell$ is the length of the shortest cycle in the graph $G$, and if $1\le r<\frac{\ell}{2}$, then $\N(G,r)$ is homotopy equivalent to the geometric realization of $G$.
\end{lemma}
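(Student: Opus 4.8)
The plan is to apply the nerve theorem. Since $\N(G,r)=\N(G,\lfloor r\rfloor)$ and $1\le\lfloor r\rfloor<\tfrac\ell2$ (as $\lfloor r\rfloor\le r$), we may assume $r$ is an integer. Let $|G|$ denote the geometric realization of $G$. Because $r\ge 1$, every point of $|G|$ lies within distance $\tfrac12\le\tfrac r2$ of some vertex, so the closed balls $B(v,\tfrac r2)$ of radius $\tfrac r2$ centered at the vertices $v\in V(G)$ form a cover of $|G|$, and by the definition of the \v{C}ech complex, $\N(G,r)$ is precisely the nerve of this cover. Subdividing each edge of $G$ at its midpoint produces a simplicial complex $G'$ with $|G'|=|G|$ in which every ball $B(v,\tfrac r2)$ is a subcomplex — a routine check using that $r$ is an integer — so by the version of the nerve theorem for covers of a simplicial complex by subcomplexes (\Cref{thm:nerve}), it suffices to show the cover is \emph{good}: every $B(v,\tfrac r2)$, and every nonempty finite intersection of such balls, is contractible. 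Then $\N(G,r)\simeq|G'|=|G|$.

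Goodness is where the hypothesis $r<\tfrac\ell2$ (equivalently, radius less than $\tfrac\ell4$) enters, and I would establish it in three steps. First, a short lemma: any two points of $|G|$ at distance less than $\tfrac\ell2$ are joined by a \emph{unique} geodesic, since the union of two distinct geodesics with the same endpoints would contain a simple cycle of $G$ of length at most twice that distance, hence less than $\ell$, contradicting the definition of $\ell$. Second — the crux — each ball $B(v,\tfrac r2)$ is geodesically convex in $|G|$: given $p,q\in B(v,\tfrac r2)$ we have $d(p,q)\le r<\tfrac\ell2$, so there is a unique geodesic $\gamma$ from $p$ to $q$, and concatenating the unique radial geodesics $v\to p$ and $q\to v$ with $\gamma$ yields a closed walk in $|G|$ of length at most $\tfrac r2+r+\tfrac r2=2r<\ell$. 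A closed walk of length less than $\ell$ cannot traverse all the edges of any cycle, so its image is a subtree of $|G|$; inside a tree the arc between two points lies in the union of the arcs from those points to any third point, so $\gamma$, being the arc of this tree from $p$ to $q$, is contained in the union of the two radial geodesics, each of which lies in $B(v,\tfrac r2)$. Hence $\gamma\subseteq B(v,\tfrac r2)$. Third, finite intersections of convex sets are convex, so every nonempty intersection $I$ of such balls is convex with diameter at most $r<\tfrac\ell2$; fixing $p_0\in I$, the unique geodesics from $p_0$ to the points of $I$ stay in $I$ by convexity and vary continuously with their endpoints, so sliding along them deformation retracts $I$ to $p_0$. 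In particular each ball is contractible, so the cover is good.

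The step I expect to be the main obstacle is the second one, the convexity of the balls: this is exactly where the sharp bound $r<\tfrac\ell2$ (rather than merely $r<\ell$) is needed, and where one must rule out the geodesic between two points of a ball bulging outside of it; the ``image is a subtree'' argument above is the key idea. The remaining ingredients are routine — the bookkeeping that each $B(v,\tfrac r2)$ is a subcomplex of the subdivision $G'$, and the observation that the nerve of the subcomplex cover $\{B(v,\tfrac r2)\}_{v\in V(G)}$ coincides with $\N(G,r)$ — after which the nerve theorem completes the proof.
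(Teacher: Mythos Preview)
Your proposal is correct and takes essentially the same approach as the paper: both apply the nerve theorem to the cover of $|G|$ by closed balls of radius $\tfrac r2$, after noting that the hypothesis $r<\tfrac{\ell}{2}$ forces every nonempty intersection of such balls to be contractible. The paper's proof simply asserts this contractibility and invokes the nerve lemma, whereas you supply a complete justification via geodesic convexity of the balls; so your write-up is more detailed than what the paper provides, but the underlying strategy is identical.
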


\begin{proof}
We use the description of the \v{C}ech simplicial complex $\N(G,r)$ as the nerve of all closed balls of radius $\frac{r}{2}$ centered at vertices of $G$, where these balls are drawn in the geometric realization of $G$.
If $\ell$ is the length of the shortest cycle in the graph $G$, and if $1\le r<\frac{\ell}{2}$, then any intersection of such balls of radius $\frac{r}{2}$ is either empty or contractible.
Hence the nerve lemma~\cite[Corollary~4G.3]{Hatcher} applies to give that  that $\N(G,r)$ is homotopy equivalent to the union of all such balls, which is equal to the geometric realization of $G$.
\end{proof}

\subsection*{Hypercube graphs}

The $n$-dimensional hypercube graph $\I_n$ has $2^n$ vertices, all binary strings of length $n$, with two vertices adjacent if their Hamming distance is one.

\begin{definition}
\label{defn:hypercube}
For a positive integer $n$, the {\it $n$-dimensional  hypercube graph}, denoted by $\I_n$, is a graph whose vertex set is $V(\I_n)= \{x_1 \ldots x_n \ | \  x_i \in \{0, 1\} \ \forall \ 1 \leq i \leq n\}$ and  any two vertices $x_1 \ldots x_n$ and $y_1 \ldots y_n$ are adjacent if and only if $\sum\limits_{i=1}^n |x_i - y_i| = 1$, {\it i.e.}, the binary strings corresponding to the two vertices differ in exactly one position.
\end{definition}

We now fix some notation, which we use in the rest of the article.
For a positive integer $n$, we denote the set $\{1, \ldots, n\}$ by $[n]$.
Let  $x = x_1 \ldots x_n \in V(\I_n)$.
For any $i \in [n]$, we also denote  $x_i$ by $x(i)$.
For a $ \{i_1, i_2, \ldots, i_k\}\subseteq [n]$, we let 
$x^{i_1, \ldots, i_k} \in V(\I_n)$ be defined by 
\begin{equation}\label{eq:superscript}
    x^{i_1, \ldots, i_k}(j)  = \begin{cases}
\ x(j) & \text{if} \  j \notin \{i_1, \ldots, i_k\},\\
%\ \{0, 1\} \setminus \{x(j)\} & \text{if} \   j \in \{i_1, \ldots, i_k\}.\\
1 & \text{if} \   j \in \{i_1, \ldots, i_k\}\text{ and }x(j)=0, \\
0 & \text{if} \   j \in \{i_1, \ldots, i_k\}\text{ and }x(j)=1.
\end{cases}
\end{equation}

%It is easy to verify that for any two vertices $a=, w \in V(\I_n)$, $d(v, w) = \sum\limits_{i=1}^n |v(i)-w(i)|$ and $d(v, w) = k$ if and only if $w = v^{i_1, \ldots, i_k}$ for some $i_1, \ldots, i_k \in [n]$.
Observe  that $N_{\I_n}(x) = \{x^i : i \in [n]\}$.

\subsection*{Simplicial complex homotopy types}

In order to describe the homotopy types of certain simplicial complexes, we will use the notions of wedge sums and suspensions of topological spaces, as well as the language of stars and links in simplicial complexes.

Given two topological spaces $X$ and $Y$, their wedge sum $X\vee Y$ is the space obtained by gluing $X$ and $Y$ together at a single point.
The homotopy type of $X\vee Y$ is independent of this choice of points if $X$ and $Y$ are connected CW complexes.
For $n\ge 1$, let $\vee_n X$ denote the $n$-fold wedge sum of $X$, namely $\vee_n X = \underbrace{X\vee \ldots \vee X}_n$.

Let $\K$ be  a simplicial complex. The \emph{star} of a vertex $v\in V(\K)$ is $\st_{\K}(v)=\{\sigma\in \K : \sigma \cup \{v\}\in \K\}$.
Note that the star is contractible since it is a cone with apex $v$.
The \emph{link} of a vertex $v$ is $\lk_{\K}(v)=\{\sigma\in \K : v\notin \sigma\text{ and }\sigma \cup \{v\}\in \K\}$.
The \emph{deletion} of a vertex $v$, denoted $\K\setminus v$, is the induced simplicial complex on vertex set $V\setminus v$; the simplices of $\K\setminus v$ are all those simplices $\sigma\in \K$ such that $v\notin \sigma$.
Using the following result, which is well-known to computational topologists, the homotopy type of a complex $\K$ can be computed using the link and deletion of a vertex in $\K$ (under given conditions).

\begin{lemma}[{\cite[Lemma~1]{adamaszek2022vietoris}}]
\label{lem:splitting}
Let $\K$ be a simplicial complex, and let $v\in \K$ be a vertex such that the inclusion $\iota\colon \lk_{\K}(v)\hookrightarrow \K\setminus v$ is a null-homotopy.
Then up to homotopy we have a splitting $\K\simeq (\K\setminus v) \vee \Sigma\ \lk_{\K}(v)$.
\end{lemma}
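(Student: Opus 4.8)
The plan is to recognize $\K$ as the mapping cone of the inclusion $\iota\colon \lk_\K(v)\hookrightarrow \K\setminus v$, and then to invoke the standard fact that the mapping cone of a null-homotopic map splits off a suspension as a wedge summand.

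First I would record the basic decomposition of $\K$ into two subcomplexes. Every simplex of $\K$ either contains $v$ or does not: those that do not comprise the deletion $\K\setminus v$, while those that do (together with their faces) comprise the closed star $\st_\K(v)$. Hence $\K=\st_\K(v)\cup(\K\setminus v)$, and a simplex lies in both pieces exactly when it avoids $v$ and its union with $v$ is a simplex, so the intersection is precisely $\lk_\K(v)$. Moreover $\st_\K(v)$ is the simplicial join $v*\lk_\K(v)$, i.e.\ the honest (simplicial) cone on the link with apex $v$; in particular it is contractible. These are routine checks from the definitions of star, link, and deletion given above.

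Second, I would identify $\K$ with a mapping cone. Since the pieces are subcomplexes, all the inclusions are cofibrations, and the square expressing $\K$ as the union of $\st_\K(v)$ and $\K\setminus v$ glued along $\lk_\K(v)$ is a pushout along a cofibration, hence a homotopy pushout. Because $\st_\K(v)=v*\lk_\K(v)$ is literally the cone on $\lk_\K(v)$ attached to $\K\setminus v$ along $\iota$, this pushout is by definition the mapping cone $C_\iota=(\K\setminus v)\cup_\iota \mathrm{Cone}(\lk_\K(v))$, so $\K\simeq C_\iota$. Third, I would use the hypothesis: the homotopy type of a mapping cone depends only on the homotopy class of its attaching map, so $\iota\simeq\mathrm{const}$ lets me replace $\iota$ by a constant map $c\colon \lk_\K(v)\to\{x_0\}\subseteq \K\setminus v$. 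The cone of a constant map collapses the base of $\mathrm{Cone}(\lk_\K(v))$ to the single point $x_0$, turning it into the (unreduced) suspension $\Sigma\,\lk_\K(v)$ joined to $\K\setminus v$ at one point, which is exactly $(\K\setminus v)\vee\Sigma\,\lk_\K(v)$. Combining the two steps yields $\K\simeq(\K\setminus v)\vee\Sigma\,\lk_\K(v)$.

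The main obstacle is making the third step fully rigorous at the level of basepoints. The assertions that homotopic attaching maps produce homotopy equivalent cones, and that the cone of a constant map is a wedge with a suspension, require well-pointedness so that the wedge is well defined up to homotopy and the unreduced and reduced suspensions agree. For simplicial complexes, which are CW complexes, every inclusion of a subcomplex is a cofibration and the relevant subspaces are well-pointed, so these technical hypotheses hold automatically; the only genuine care is in the degenerate cases where $\lk_\K(v)$ is empty or a single point, which are immediate but should be noted separately.
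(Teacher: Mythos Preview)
Your proof is correct. The paper does not prove this lemma directly but instead remarks that it is a special case of \Cref{lem:complex_union}: one writes $\K=(\K\setminus v)\cup\st_\K(v)$ with intersection $\lk_\K(v)$, notes that both inclusions of the link into the two pieces are null-homotopic (into $\st_\K(v)$ because the star is contractible, into $\K\setminus v$ by hypothesis), and then applies the more general splitting $\K\simeq K_1\vee K_2\vee\susp(K_1\cap K_2)$, where $K_2=\st_\K(v)$ disappears since it is contractible. You start from the identical decomposition but proceed via a direct mapping-cone argument rather than invoking that black-box lemma. Your route is more self-contained and makes the homotopy-theoretic mechanism explicit, at the cost of needing to mention cofibrations and well-pointedness; the paper's route trades that for a cleaner one-line reduction but pushes the actual work into the cited \Cref{lem:complex_union}. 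Either way the underlying content is the same homotopy pushout.
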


More generally, we also have the following result that will be used in this article to determine the homotopy type of a simplicial complex.
We recall that if $X$ is a topological space, then its suspension is the quotient space $\susp X = (X \times [0,1]) / (X\times\{0\}, X\times\{1\})$.

\begin{lemma}[{\cite[Remark 2.4]{SSA22}}]
\label{lem:complex_union}
Let the simplicial complex $\K=K_1\cup K_2$ be such that the inclusion maps $K_1\cap K_2 \hookrightarrow K_1$ and $K_1\cap K_2 \hookrightarrow K_2$ are null-homotopies.
Then $\K\simeq K_1 \vee K_2 \vee \susp (K_1\cap K_2)$.
\end{lemma}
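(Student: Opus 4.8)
The plan is to recognize $\K=K_1\cup K_2$ as a homotopy pushout and then use the two null-homotopy hypotheses to collapse its attaching maps to constants. Write $A=K_1\cap K_2$. Since $K_1$ and $K_2$ are subcomplexes of $\K$ with $K_1\cup K_2=\K$ and $K_1\cap K_2=A$, the complex $\K$ is the pushout of the span $K_1\hookleftarrow A\hookrightarrow K_2$, and because both legs are inclusions of subcomplexes (hence cofibrations) this pushout is simultaneously a homotopy pushout. Concretely, I would use the standard fact that the canonical collapse map $M\to\K$ is a homotopy equivalence, where $M$ is the double mapping cylinder: the quotient of $K_1\sqcup(A\times[0,1])\sqcup K_2$ that glues $A\times\{0\}$ onto $A\subseteq K_1$ and $A\times\{1\}$ onto $A\subseteq K_2$.

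Next I would invoke the hypotheses. Pick null-homotopies of the inclusions $A\hookrightarrow K_1$ and $A\hookrightarrow K_2$; after composing with a path if necessary, we may assume each terminates at a constant map, at vertices $w_1\in K_1$ and $w_2\in K_2$ respectively. A double mapping cylinder depends, up to homotopy equivalence, only on the homotopy classes of its two attaching maps, so $M$ is homotopy equivalent to the double mapping cylinder $M'$ of the constant maps $A\to\{w_1\}$ and $A\to\{w_2\}$. In $M'$ the middle cylinder $A\times[0,1]$ has $A\times\{0\}$ collapsed to the single point $w_1$ and $A\times\{1\}$ collapsed to the single point $w_2$; hence its image is a copy of the unreduced suspension $\susp A$ whose two cone points are $w_1$ and $w_2$. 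Thus $M'$ is $\susp A$ with $K_1$ attached at one of its cone points and $K_2$ attached at the other.

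To finish, I would use that $\susp A$ is a path-connected CW complex (when $A\neq\emptyset$) and that one-vertex inclusions into CW complexes are cofibrations, so one may slide the second attaching point to the first along a path inside $\susp A$; this yields $M'\simeq K_1\vee K_2\vee\susp A$. Chaining the equivalences gives $\K\simeq M\simeq M'\simeq K_1\vee K_2\vee\susp A$, as claimed. As a consistency check, the Mayer--Vietoris sequence of $\K=K_1\cup K_2$ has its maps $\widetilde H_\ast(A)\to\widetilde H_\ast(K_i)$ induced by null-homotopic inclusions, hence zero, so it degenerates into split short exact sequences $0\to\widetilde H_\ast(K_1)\oplus\widetilde H_\ast(K_2)\to\widetilde H_\ast(\K)\to\widetilde H_{\ast-1}(A)\to0$, matching the homology of $K_1\vee K_2\vee\susp A$.

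The step I expect to require the most care --- although every ingredient is standard --- is the passage from the honest union $\K=K_1\cup K_2$ to the homotopy pushout $M$: this is the gluing lemma, and it is precisely here that the cofibrancy of subcomplex inclusions is used and cannot be dropped. A secondary subtlety is the final re-assembly into the \emph{unordered} wedge $K_1\vee K_2\vee\susp A$, which genuinely relies on $\susp A$ being connected; the degenerate case $A=\emptyset$ (where $\K=K_1\sqcup K_2$ and $\susp\emptyset=S^0$) falls outside this argument and should be checked by hand.
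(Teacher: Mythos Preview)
The paper does not prove this lemma at all: it is stated with a citation to \cite[Remark~2.4]{SSA22} and used as a black box. Your argument is correct and is the standard proof of this fact --- identify $\K=K_1\cup K_2$ as a homotopy pushout via the double mapping cylinder (using that subcomplex inclusions are cofibrations), invoke homotopy invariance of homotopy pushouts to replace the null-homotopic inclusions by constant maps, and read off the suspension-with-wedge description. Your flagged subtleties (the gluing lemma step and the degenerate case $A=\emptyset$) are exactly the right ones to watch.
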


We note that \Cref{lem:splitting} is a particular case of \Cref{lem:complex_union}.
Indeed, observe that $\K = (\K\setminus v)\cup \st_{\K}(v)$ and $(\K\setminus v)\cap \st_{\K}(v)= \lk_{\K}(v) \hookrightarrow \st_{\K}(v)$.
Moreover, the fact that $\st_{\K}(v)$ is contractible implies that this inclusion is a null-homotopy.

A topological space $X$ is said to be $k$-{\it connected} if every map from an $m$-dimensional sphere $\bs^m \to X$ can be extended to a map from the $(m+1)$-dimensional disk $D^{m+1} \to X$ for $m = 0, 1, \ldots , k$.
If $X$ is $k$-connected, then all its homotopy groups $\pi_i(X)$ are trivial for $0 \leq i \leq k$.

The following result, also known as a \emph{nerve theorem}, can be used to compute the homotopy type of a simplicial complex $\K$ when $\K$ is union of two or more complexes.

\begin{theorem}\cite[Theorem 10.6]{bjorner}
\label{thm:nerve}
Let $\Delta$ be a simplicial complex and let $(\Delta_i)_{i \in I}$ be a family of subcomplexes such that $\Delta = \bigcup\limits_{i \in I} \Delta_i$.
\begin{itemize}
	\item[(i)] \label{nerve1}Suppose every nonempty finite intersection $\Delta_{i_1} \cap \ldots \cap \Delta_{i_t}$ for $i_j \in I$ and $t \in \bN$ is contractible.
    Then $\Delta$ and the nerve $\mathbf{N}(\{\Delta_i\})$ are homotopy equivalent.
	\item[(ii)] Suppose every nonempty finite intersection $\Delta_{i_1} \cap \ldots \cap \Delta_{i_t}$ is $(k-t+1)$-connected.
    Then $\Delta$ is $k$-connected if and only if  $\mathbf{N}(\{\Delta_i\})$ is $k$-connected.
\end{itemize}
\end{theorem}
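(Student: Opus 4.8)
The plan is to treat both parts uniformly by inserting an auxiliary space between $\Delta$ and the nerve $\mathbf{N}=\mathbf{N}(\{\Delta_i\})$, namely the \emph{Mayer--Vietoris blowup complex} (equivalently, the homotopy colimit of the diagram of intersections). For each simplex $\sigma=\{i_1,\dots,i_t\}$ of $\mathbf{N}$ write $\Delta_\sigma=\Delta_{i_1}\cap\dots\cap\Delta_{i_t}$, and let $\Delta^\sigma$ denote the closed geometric simplex spanned by $\sigma$ inside $|\mathbf{N}|$. Define
\[
 B \;=\; \Big(\,\bigsqcup_{\sigma\in\mathbf{N}}\Delta_\sigma\times\Delta^\sigma\,\Big)\big/\!\sim,
\]
where for a face $\tau\subseteq\sigma$ one glues $\Delta_\sigma\times\Delta^\tau$ into $\Delta_\tau\times\Delta^\tau$ using the inclusion $\Delta_\sigma\hookrightarrow\Delta_\tau$ (enlarging the index set shrinks the intersection). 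The two coordinate projections are continuous maps $p\colon B\to\Delta$, $(x,t)\mapsto x$, and $q\colon B\to|\mathbf{N}|$, $(x,t)\mapsto t$. I would show that $p$ is always a homotopy equivalence, that $q$ is a homotopy equivalence under the hypothesis of (i), and that $q$ induces isomorphisms on homotopy groups up through degree $k$ under the hypothesis of (ii); chaining these statements through $B$ then proves the theorem.

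First I would establish that $p$ is a homotopy equivalence, using no assumption on the intersections. If $x$ lies in the open cell $e$ of $\Delta$, set $S_x=\{i:e\subseteq\Delta_i\}$ (each $\Delta_i$ is a subcomplex, so this is well defined); then the preimage $p^{-1}(x)$ is the full closed simplex spanned by $S_x$, hence contractible. Since $B$ is a CW complex and all gluing maps are inclusions of subcomplexes (thus cofibrations), an induction over the skeleta of $\Delta$, applying the gluing lemma for homotopy equivalences cell by cell, promotes these contractible preimages to a global equivalence $B\simeq\Delta$. This is the standard fact that for a cover by subcomplexes the natural map from the homotopy colimit to the union is a homotopy equivalence, and it requires no hypothesis, so it holds in both (i) and (ii).

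Next I would compare $B$ with $|\mathbf{N}|$ via $q$, whose fiber over a point in the relative interior of $\Delta^\sigma$ is exactly $\Delta_\sigma$. Under the hypothesis of (i) every $\Delta_\sigma$ is contractible, and projecting away this contractible factor while inducting over the skeleta of $\mathbf{N}$ --- attaching $\Delta_\sigma\times\Delta^\sigma$ along $\Delta_\sigma\times\partial\Delta^\sigma$ and invoking the gluing lemma --- shows $q$ is a homotopy equivalence; combined with the previous paragraph this gives $\Delta\simeq B\simeq|\mathbf{N}|$, proving (i). For (ii) I would feed the graded connectivity hypothesis into the same comparison: the piece of $B$ lying over a simplex $\sigma$ with $t$ vertices has fiber $\Delta_\sigma$, which is $(k-t+1)$-connected, and this shift of connectivity against the filtration degree $\dim\Delta^\sigma=t-1$ is exactly what keeps the difference between $B$ and $|\mathbf{N}|$ invisible below degree $k+1$. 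The cleanest quantitative route is the homotopy-colimit (Bousfield--Kan) spectral sequence, whose $E^1$ page is $\bigoplus_\sigma H_*(\Delta_\sigma)$ placed in filtration degree $\dim\Delta^\sigma$; the connectivity hypothesis forces all relevant entries to vanish in total degree $\le k$, so $q$ induces isomorphisms on homotopy (and homology) groups through degree $k$, whence $\Delta$ is $k$-connected if and only if $|\mathbf{N}|$ is.

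The main obstacle is this last transfer of \emph{connectivity} rather than contractibility: one must simultaneously track the size $t=|\sigma|$ of each intersection and its connectivity $k-t+1$, and verify that the off-diagonal contributions (the cells attached over higher simplices of $\mathbf{N}$) land strictly above the range controlling $\pi_{\le k}$. A secondary technical point, relevant when the index set $I$ is infinite, is the reduction to finite subfamilies: homotopy groups are compactly supported and every simplex of $\Delta$ or of $\mathbf{N}$ involves only finitely many indices, so one passes to the colimit over finite $J\subseteq I$, for which all of the finite gluing arguments above apply verbatim.
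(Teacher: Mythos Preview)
The paper does not give its own proof of this statement: it is quoted verbatim as \cite[Theorem~10.6]{bjorner} and used as a black box, so there is no in-paper argument to compare your proposal against. Your outline via the Mayer--Vietoris blowup (homotopy colimit) with the two projections $p$ and $q$ is the standard route to this nerve theorem and is essentially how it is established in the cited reference, so your approach is both correct and aligned with the literature the paper relies on.
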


\section{Description of $\N(\I_n,r)$}
\label{sec:nerve-hypercube}

Recall that $\I_n$ is the hypercube graph with $2^n$ vertices.
In this section we describe the maximal simplices in the \v{C}ech simplicial complexes $\N(\I_n,r)$, before then describing what is known about the homotopy types and homology groups of these complexes.

The following lemma is a simple consequence of \Cref{lem:cech}, which gives an equivalent definition of \v{C}ech complexes of graphs; see Figure~\ref{fig:maximal}.

\begin{lemma}
\label{lem:maximal-simplices}
For $n \ge 1$ and $r \ge 0$, let $\sigma$ be a maximal simplex of the \v{C}ech complex $\N(\I_n,r)$.
Then, there exists $u\in V(\I_n)$ or $(v,w) \in E(\I_n)$ such that
\begin{equation*}
\sigma= \begin{cases}
N_{\frac{r}{2}}[u] & \text{ if } r \text{ is even},\\
N_{\frac{r-1}{2}}[v]\cup N_{\frac{r-1}{2}}[w] & \text{ if } r \text{ is odd}.
\end{cases}
\end{equation*}
\end{lemma}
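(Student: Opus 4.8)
The plan is to read the statement off directly from \Cref{lem:cech}. That lemma says $\N(\I_n,r)$ is \emph{generated by} the sets $N_{\frac r2}[u]$ for $u\in V(\I_n)$ when $r$ is even, and by the sets $N_{\frac{r-1}{2}}[v]\cup N_{\frac{r-1}{2}}[w]$ for $(v,w)\in E(\I_n)$ when $r$ is odd; equivalently, a subset $\tau\subseteq V(\I_n)$ is a simplex of $\N(\I_n,r)$ if and only if $\tau$ is contained in one of these generating sets. So the first step is simply to invoke this: a maximal simplex $\sigma$ is in particular a simplex, hence $\sigma$ is contained in some generating set $G$ of the form appropriate to the parity of $r$.

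The second step is to observe that each generating set $G$ is itself a simplex of $\N(\I_n,r)$, so that $\sigma\subseteq G$ together with the maximality of $\sigma$ forces $\sigma=G$, which is exactly the displayed formula. Checking that $G$ is a simplex amounts to exhibiting a common point of the corresponding balls of radius $\frac r2$ drawn in the geometric realization of $\I_n$: when $r$ is even the vertex $u$ lies in the ball about every $w\in N_{\frac r2}[u]$ since $d(w,u)\le \frac r2$; when $r$ is odd the midpoint of the edge $(v,w)$ lies at distance at most $\frac{r-1}{2}+\frac12=\frac r2$ from every vertex of $N_{\frac{r-1}{2}}[v]\cup N_{\frac{r-1}{2}}[w]$. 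This is precisely the content already established inside the proof of \Cref{lem:cech}, so it can be cited rather than reproved.

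There is essentially no obstacle here. The only mild point worth keeping in mind while writing is that the statement does \emph{not} assert that every set of the displayed form is a maximal simplex --- some neighborhood $N_{\frac r2}[u]$ may be properly contained in another, and similarly in the odd case --- it asserts only that every maximal simplex has one of these two forms, which is exactly the direction that follows immediately from the generation statement. One may optionally append a sentence specializing to $r=0$, where $N_0[u]=\{u\}$ recovers that $\N(\I_n,0)$ is discrete, and to $r=1$, where the generating sets $N_0[v]\cup N_0[w]=\{v,w\}$ are the edges, consistent with $\N(\I_n,1)=\I_n$.
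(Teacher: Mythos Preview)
Your proposal is correct and matches the paper's approach exactly: the paper does not give a separate proof but simply remarks that \Cref{lem:maximal-simplices} ``is a simple consequence of \Cref{lem:cech}.'' Your write-up spells out that consequence carefully---that every simplex lies in a generating set, that each generating set is itself a simplex, and hence a maximal simplex must equal some generating set---which is precisely the intended argument.
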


\begin{figure}[htb]
\centering
\includegraphics[width=5in]{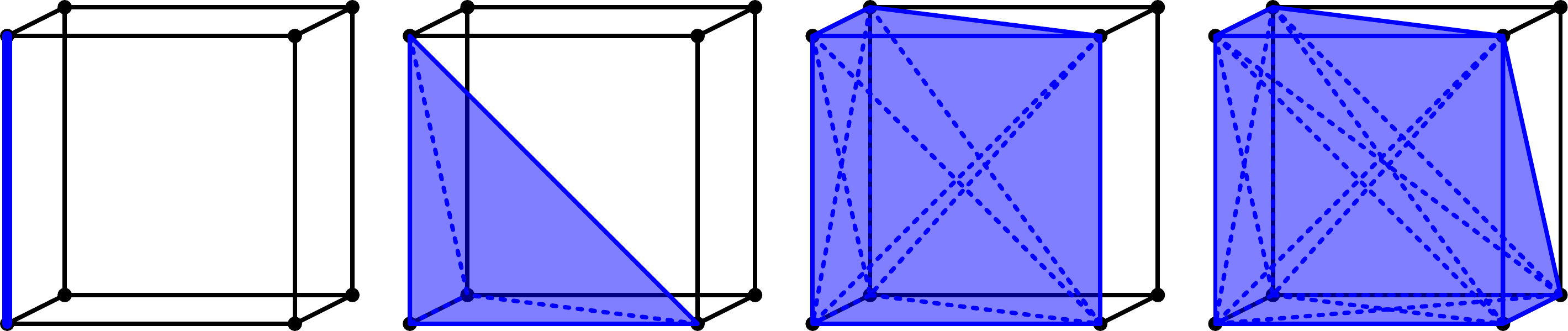}
\caption{A maximal simplex in $\N(\I_n,r)$ for $n=3$ and for $r=1,2,3,4$.
Complex $\N(\I_3,1)$ has twelve maximal edges; $\N(\I_3,2)$ has eight maximal tetrahedra (one for each vertex); $\N(\I_3,3)$ has twelve maximal $5$-simplices (one for each edge); and $\N(\I_3,4)$ has eight maximal $6$-simplices (one for each vertex).
The homotopy types are $\N(\I_3,1)\simeq \vee_5 \bs^1$; $\N(\I_3,2)\simeq \vee_7 \bs^2$; $\N(\I_3,3)\simeq \vee_3 \bs^4$; $\N(\I_3,4)\simeq \bs^6$.}
\label{fig:maximal}
\end{figure}

Table~\ref{table} shows the known homotopy types of $\N(\I_n,r)$.
The row $r=0$ follows since $\N(\I_n,0)$ is a disjoint union of vertices.
The row $r=1$ follows from an Euler characteristic computation, since $\N(\I_n,1)$ is a connected graph.
The diagonal $r=2n-2$ contains homeomorphisms $\N(\I_n,2n-2)\cong \bs^{2^n-2}$ with $(2^n-2)$-dimensional spheres, since $\N(\I_n,2n-2)$ is the boundary of the $(2^n-1)$-dimensional simplex with $2^n$ vertices.
One can check that $\N(\I_n, 2n-3)$ is equal to the complex $\Theta(\text{Cube}(n,1))$ from \cite{conant2010boolean}; see the table in their Theorem~5, and see their Example~3 for a proof that $\N(\I_3,3) = \Theta(\mbox{Cube}(3,1)) \simeq \vee_3 \bs^4$.
We prove the homotopy equivalences to wedge sums of 2-spheres in the row $r=2$ in \Cref{thm:main_for_index_2}.
All of the other entries are not yet known up to homotopy type.
When $(n,r)=(4,3), (5,3), (4,4), (5,4), (4,6)$, computer computations using Polymake~\cite{polymake:2000} give the following reduced homology groups:

\begin{align*}
\tilde{H}_i(\N(\I_4,3))&=\Z \text{ for }i=3 && \Z^{24} \text{ for }i=4 && 0 \text{ for }i\neq 3,4\\
\tilde{H}_i(\N(\I_5,3))&=\Z^9 \text{ for }i=3 && \Z^{120} \text{ for }i=4 && 0 \text{ for }i\neq 3,4\\
\tilde{H}_i(\N(\I_4,4))&=\Z \text{ for }i=4 && \Z^{10} \text{ for }i=6 && 0 \text{ for }i\neq 4,6\\
\tilde{H}_i(\N(\I_5,4))&=\Z^{11} \text{ for }i=4 && \Z^{60} \text{ for }i=6 && 0 \text{ for }i\neq 4,6\\
\tilde{H}_i(\N(\I_4,6))&=\Z^7 \text{ for }i=10 &&  && 0 \text{ for }i\neq 10\\
\end{align*}
In \Cref{thm:main3}, we show that for $n \geq 4$, the reduced homology group $\tilde{H}_i(\N(\I_n, 3);\Z)$ is nonzero if and only if $i \in \{3, 4\}$.

\begin{center}
\begin{table}[H]
\renewcommand{\arraystretch}{1.2}
\begin{tabular}{| >{$} c <{$} | >{$} c <{$} | >{$} c <{$} | >{$} c <{$} | >{$} c <{$} | >{$} c <{$} | >{$} c <{$} | >{$} c <{$} | >{$} c <{$} |} 
\hline
\N(\I_n,r) & n=1 & 2 & 3 & 4 & 5 & 6 & 7 & 8 \\
\hline
r=0 & \bs^0 & \vee_3 \bs^0 & \vee_7 \bs^0 & \vee_{15} \bs^0 & \vee_{31} \bs^0 & \vee_{63} \bs^0 & \vee_{127} \bs^0 & \vee_{255} \bs^0 \\
\hline
1 & * & \bs^1 & \vee_5 \bs^1 & \vee_{17} \bs^1 & \vee_{49} \bs^1 & \vee_{129} \bs^1 & \vee_{321} \bs^1 & \vee_{769} \bs^1 \\
\hline
2 & * & \bs^2 & \vee_7 \bs^2 & \vee_{31}\bs^2 & \vee_{111}\bs^2 & \vee_{351}\bs^2 & \vee_{1023}\bs^2 & \vee_{2815}\bs^2 \\
\hline
3 & * & * & \vee_3 \bs^4 & \beta_3=1; \beta_4=24
 
& \beta_3=9; \beta_4=120

& & 
& \\
\hline
4 & * & * & \bs^6 & \beta_4=1; \beta_6=10

& \beta_4=11; \beta_6=60

& 
& & \\
\hline
5 & * & * & * & \beta_{10}=7

& 
& & & \\
\hline
6 & * & * & * & \bs^{14} & & & & \\
\hline
7 & * & * & * & * & & & & \\
\hline
8 & * & * & * & * & \bs^{30} & & & \\
\hline
\end{tabular}
\caption{The known homotopy types or Betti numbers of $\N(\I_n,r)$.}
\label{table}
\end{table}
\end{center}

\section{The case of $r=2$}
\label{sec:r=2}

In this section, we characterize the homotopy type of $\N(\I_n,2)$ for all $n$.
It is easy to see that $\N(\I_1,2)$ is contractible.
For $n\ge 2$, the complex $\N(\I_n,2)$ has $2^n$ maximal $n$-simplices, each of the form $N_1[v]$ as $v$ varies over the vertices of $\I_n$.
We will show that these complexes are always homotopy equivalent to a wedge sum of 2-spheres.

We remark that in the seminal paper~\cite{lovasz}, Lov{\'a}sz considers simplicial complexes (called neighborhood complexes) which are generated by the open neighborhoods $N(v)$.
Using topology of the neighborhood complex of the graph $G$, Lov\'asz gave a general lower bound for the chromatic number of $G$.
Instead, in this section we are considering simplicial complexes $\N(G,2)$ generated by the closed neighborhoods $N_1[v]$, in the particular case when $G$ is a hypercube graph.
Let us now describe the homotopy types of $\N(\I_n,2)$.

\begin{theorem}
\label{thm:main_for_index_2}
Let $n\ge 2$.
Then the {\v C}ech complex $\N(\I_n,2)$ is homotopy equivalent to a wedge sum of $2^{n-2}(n^2-3n+4)-1$ spheres of dimension $2$, {\it i.e.}, 
\[\N(\I_n,2) \simeq \bigvee\limits_{2^{n-2}(n^2-3n+4)-1} \bs^2.\]
\end{theorem}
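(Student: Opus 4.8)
The plan is to compute the homotopy type of $\N(\I_n,2)$ by induction on $n$, using the splitting \Cref{lem:splitting} (or \Cref{lem:complex_union}) repeatedly to strip off one coordinate at a time. Write vertices of $\I_n$ as $0w$ and $1w$ with $w\in V(\I_{n-1})$, so $\N(\I_n,2)$ is covered by the two subcomplexes $A$ and $B$ induced on $\{0w\}$ and $\{1w\}$ respectively. The first step is to identify these pieces and their intersection. A maximal simplex of $\N(\I_n,2)$ is $N_1[v]=\{v\}\cup\{v^i : i\in[n]\}$; restricting to vertices with first bit $0$, the subcomplex $A$ is generated by $N_1[0w]\cap\{0\ast\} = \{0w\}\cup\{0w^i : i\in[n-1]\}$, which is exactly (a copy of) $\N(\I_{n-1},2)$. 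Likewise $B\cong\N(\I_{n-1},2)$. The intersection $A\cap B$ is the induced subcomplex on the vertices that lie in a common maximal simplex from both sides; I expect $A\cap B$ to be a disjoint union of simplices (one "edge-direction matching" per maximal simplex), hence a disjoint union of contractible pieces, so that the inclusions $A\cap B\hookrightarrow A$ and $A\cap B\hookrightarrow B$ are null-homotopic once we check each component maps into a cone point — this is the content I would verify carefully.

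Granting that, \Cref{lem:complex_union} gives $\N(\I_n,2)\simeq A\vee B\vee\susp(A\cap B)$. If $A\cap B$ has $c_n$ contractible components then $\susp(A\cap B)\simeq \vee_{c_n-1}\bs^1$... but this is dimension $1$, not $2$, so the naive one-coordinate split is too crude. The fix, and the actual second step, is to instead pick a single vertex $v$ and apply \Cref{lem:splitting} with the link $\lk_{\N(\I_n,2)}(v)$. The star of $v$ is the simplex $N_1[v]$; the link is the boundary-type complex of faces $\sigma$ with $\sigma\cup\{v\}\in\N(\I_n,2)$ but $v\notin\sigma$. One shows $\lk_{\N(\I_n,2)}(v)$ is homotopy equivalent to a wedge of $1$-spheres (it is a graph-like complex on the $n$ neighbors $v^i$ of $v$ together with vertices at distance $2$), and that the inclusion into $\N(\I_n,2)\setminus v$ is null-homotopic because $\N(\I_n,2)\setminus v$ is still built from the other closed-neighborhood simplices which contract the link. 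Then $\N(\I_n,2)\simeq(\N(\I_n,2)\setminus v)\vee\susp\lk$, where $\susp\lk$ is a wedge of $2$-spheres, and $\N(\I_n,2)\setminus v$ is handled by continuing to delete vertices (or by a direct combinatorial argument) until one reaches a contractible or lower-dimensional complex whose homotopy type is transparent.

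I would organize the induction so that the bookkeeping reduces to counting: let $a_n$ be the number of $2$-spheres in the wedge, i.e. $a_n = 2^{n-2}(n^2-3n+4)-1$. Set up a recursion for $a_n$ coming from the splitting — at each deletion step one adds $(\text{number of }1\text{-spheres in the suspended link})$ new $2$-spheres — and verify it matches the closed form; one checks the base case $n=2$ directly ($\N(\I_2,2)$ is the boundary of a tetrahedron missing... actually it is $\bs^2$, so $a_2=1=2^0(4-6+4)-1$, consistent). An Euler-characteristic sanity check is available throughout: if $\N(\I_n,2)\simeq\vee_{a_n}\bs^2$ then $\chi=1+a_n$, and $\chi$ can be computed from the $f$-vector of $\N(\I_n,2)$, whose faces are the subsets of the $2^n$ generating $n$-simplices $N_1[v]$, with overlaps governed by Hamming-distance conditions; this gives an independent derivation of the formula $2^{n-2}(n^2-3n+4)$ and a check on the homotopy argument.

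The main obstacle will be proving that the relevant inclusions are null-homotopic — both that $\lk_{\N(\I_n,2)}(v)\hookrightarrow \N(\I_n,2)\setminus v$ is null-homotopic, and, in whatever inductive form the argument takes, that the deleted complex remains homotopically simple. Concretely, one must understand the combinatorics of which pairs of closed neighborhoods $N_1[u], N_1[u']$ intersect in the hypercube and in what pattern (this is where $d(u,u')\le 2$ conditions and the resulting "squares" in $\I_n$ enter), and then argue that a cycle in the link bounds in the deletion by filling it using some $N_1[u]$ with $u\ne v$. I expect this to require a careful but elementary case analysis on Hamming distances, and it is the only genuinely non-formal part of the proof; everything else is the repeated application of \Cref{lem:splitting}/\Cref{lem:complex_union} plus arithmetic to pin down the count $2^{n-2}(n^2-3n+4)-1$.
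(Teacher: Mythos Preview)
Your second strategy---deleting vertices one at a time and applying \Cref{lem:splitting}---is exactly what the paper does, but you are missing the idea that makes the induction close. If you delete a single vertex $v$ from $\N(\I_n,2)$, the remaining complex is not $\N(\I_{n-1},2)$ or any other hypercube \v{C}ech complex; it is something new, and at the next deletion you would have to analyse link and deletion from scratch with no inductive hypothesis available. The paper fixes this by enlarging the family: for every integer $m\ge 2$ let $G_m$ be the graph on $\{0,1,\ldots,m-1\}$ (written in binary) with Hamming adjacency, so $G_{2^n}=\I_n$. One then proves, by deleting the specific vertex $m-1$, that
\[
\N(G_m,2)\ \simeq\ \N(G_{m-1},2)\ \vee\ \bigvee_{(k-1)^2}\bs^2,
\]
where $k$ is the number of $1$'s in the binary expansion of $m-1$. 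The deletion is $\N(G_{m-1},2)$ together with the single extra simplex $\Delta^{N_{G_m}(m-1)}$, and the link is computed via the nerve theorem (\Cref{thm:nerve}) to be the $1$-skeleton of a $k$-simplex, hence $\bigvee_{\binom{k}{2}}\bs^1$. Summing $(k-1)^2$ over $m=2,\ldots,2^n$ yields $2^{n-2}(n^2-3n+4)-1$.

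Your worry about null-homotopy dissolves in this setup and requires no Hamming-distance case analysis: by the inductive hypothesis the deletion is a wedge of $2$-spheres, hence simply connected, so any map from the link (a wedge of circles) into it is automatically null-homotopic.

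Finally, your first attempt cannot work as written: the induced subcomplexes $A$ on $\{0w\}$ and $B$ on $\{1w\}$ have disjoint vertex sets, so $A\cap B=\emptyset$ and $A\cup B$ is not a cover of $\N(\I_n,2)$---it misses every simplex containing both a $0\ast$ and a $1\ast$ vertex, for instance every maximal simplex $N_1[v]$.
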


Note that the sequence $2^{n-2}(n^2-3n+4)-1$ is listed as the Bj\"{o}rner--Welker sequence $A055580(n-2)$ on the Online Encyclopedia of Integer Sequences (OEIS)~\cite{oeis,bjorner1995homology}.
The idea of the proof here is along the same lines as the proof of \cite[Theorem 1]{adamaszek2022vietoris}, as follows.
Recall that the metric space $\I_n$ is the set of all $2^n$ binary strings of length $n$, namely the numbers from $0$ to $2^n-1$ written in binary, equipped with the Hamming distance.
We furthermore consider the metric spaces $G_m$ for $m$ an integer, consisting of all numbers from $0$ to $m-1$ written as binary strings, also equipped with the Hamming distance.
To make our discussion simpler, we will also treat $G_m$ as the graph on $m$ vertices in which two vertices are adjacent if and only if their binary representations differ at exactly one place.
Note that $G_{2^n}=\I_n$, and so the metric spaces $G_m$ form a larger collection of metric spaces than just the hypercubes $\I_n$.
To prove \Cref{thm:main_for_index_2}, we determine the homotopy type of $\N(G_m,2)$ for all integers $m\ge 1$.

Let $m\geq 2$ be a non-negative integer with the following binary representation for $m-1$:
\[m-1=2^{i_1}+2^{i_2}+\cdots + 2^{i_k}, \text{ where } i_1 <i_2< \cdots <i_k.\]
Define $\alpha(m-1)= (k-1)^2$.

For $j\in [k]\coloneqq\{1,2,\ldots,k\}$, let $\lambda_j \coloneqq (m-1)^{i_j}= 2^{i_1}+\cdots+2^{i_{j-1}}+2^{i_{j+1}}+\cdots+2^{i_k}$, where the superscript notation $(m-1)^{i_j}$ is from \Cref{eq:superscript}.
Clearly, $N_{G_m}(m-1)=\{\lambda_1,\dots,\lambda_k\}$.
We now state a more general result, which will imply \Cref{thm:main_for_index_2}.

\begin{theorem}
\label{thm:general_for_index_2}
Let $m\ge 2$.
Then, the complex $\N(G_m,2)$ is homotopy equivalent to a wedge of $2$-dimensional spheres.
More precisely,
\begin{equation}
\label{eq:inductivestep}
\N(G_m,2) \simeq \N(G_{m-1},2) \vee \bigvee\limits_{\alpha(m-1)} \bs^2.
\end{equation}
\end{theorem}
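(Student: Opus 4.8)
The plan is to prove \Cref{thm:general_for_index_2} by induction on $m$, using \Cref{lem:splitting} with the vertex $v=m-1$ (the largest-indexed vertex of $G_m$). The base case $m=2$ is immediate since $G_2=\I_1$ and $\N(\I_1,2)$ is contractible, while $\alpha(1)=\alpha(2^0)=(1-1)^2=0$, so both sides are points. For the inductive step, note that $\N(G_m,2)\setminus(m-1)$ is the induced subcomplex of $\N(G_m,2)$ on the first $m-1$ vertices. The first task is to check that this equals $\N(G_{m-1},2)$: the generating simplices of $\N(G_{m-1},2)$ are the sets $N_{G_{m-1}}[u]$ for $u\in\{0,\dots,m-2\}$, and one must verify that deleting the vertex $m-1$ from the generators $N_{G_m}[u]$ of $\N(G_m,2)$ recovers exactly these. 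This amounts to the observation that for $u\le m-2$, the vertex $m-1$ lies in $N_{G_m}[u]$ iff $u\sim m-1$ or $u=m-1$; since $u\ne m-1$, removing $m-1$ from $N_{G_m}[u]$ yields $N_{G_{m-1}}[u]$ as long as $N_{G_{m-1}}[u]$ is itself a face (which it is, being a generator). So $\N(G_m,2)\setminus(m-1)=\N(G_{m-1},2)$.

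Next I would identify $\lk_{\N(G_m,2)}(m-1)$. By \Cref{lem:maximal-simplices} the only maximal simplex of $\N(G_m,2)$ containing $m-1$ is $N_{G_m}[m-1]=\{m-1,\lambda_1,\dots,\lambda_k\}$, so every simplex containing $m-1$ is a subset of this set, and hence $\lk_{\N(G_m,2)}(m-1)$ is the full simplex on the vertex set $\{\lambda_1,\dots,\lambda_k\}\cap V(G_m)$ — but we must intersect with what is actually present, i.e. discard any $\lambda_j\ge m$; in fact all $\lambda_j=(m-1)^{i_j}<m-1$, so all $k$ neighbors are present. Thus $\lk_{\N(G_m,2)}(m-1)$ is the full $(k-1)$-simplex $\Delta^{k-1}$ on $\{\lambda_1,\dots,\lambda_k\}$, which is contractible. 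Therefore the inclusion $\lk_{\N(G_m,2)}(m-1)\hookrightarrow \N(G_m,2)\setminus(m-1)$ is automatically null-homotopic (any map from a contractible complex is null-homotopic), and \Cref{lem:splitting} gives $\N(G_m,2)\simeq (\N(G_{m-1},2))\vee \susp\lk_{\N(G_m,2)}(m-1)=\N(G_{m-1},2)\vee\susp\Delta^{k-1}\simeq\N(G_{m-1},2)$, since the suspension of a contractible space is contractible. But this contradicts the claimed extra wedge of $\alpha(m-1)=(k-1)^2$ spheres unless that quantity is $0$ — so the naive choice of link must be wrong, and the subtlety is that $N_{G_m}[m-1]$ is \emph{not} a simplex of $\N(G_m,2)$ in general: it is a generator only when no proper superset issue arises, but more importantly the neighbors $\lambda_j$ need not be mutually adjacent, so $\{\lambda_1,\dots,\lambda_k\}$ spans a simplex of $\N(G_m,2)$ only because it sits inside the generator $N_{G_m}[m-1]$ — which it does. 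I realize the resolution: one must re-examine whether $\lambda_i$ and $\lambda_j$ together with other vertices form faces \emph{in $\N(G_m,2)\setminus(m-1)$}, i.e. the link is contractible in $\N(G_m,2)$ but the content of the theorem lives in how the boundary of the $(k-1)$-simplex sits inside $\N(G_{m-1},2)$ — so the right move is to instead delete a \emph{different} vertex, or to apply \Cref{lem:splitting} to the pair and carefully compute that the relevant link is $\partial\Delta^{k-1}\cong\bs^{k-2}$, not $\Delta^{k-1}$.

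Here is the main obstacle, stated honestly: the crux is computing the homotopy type of $\lk_{\N(G_m,2)}(m-1)$ correctly and verifying the null-homotopy hypothesis of \Cref{lem:splitting}. The generator $N_{G_m}[m-1]$ equals $\{m-1\}\cup\{\lambda_1,\dots,\lambda_k\}$, and I expect that after careful analysis the link decomposes as a join or as the boundary complex of a simplex whose suspension is $\bs^{k-1}$; matching $\susp\lk$ with $\vee_{(k-1)^2}\bs^2$ then forces $\lk$ to be homotopy equivalent to $\vee_{(k-1)^2}\bs^1$, i.e. a graph with first Betti number $(k-1)^2$. So the concrete plan is: (1) show $\lk_{\N(G_m,2)}(m-1)$ is the subcomplex of the simplex on $\{\lambda_1,\dots,\lambda_k\}$ consisting of those subsets $S$ such that $S\cup\{m-1\}$ is a face of $\N(G_m,2)$, i.e. such that all of $S\cup\{m-1\}$ lies in some common $N_{G_m}[u]$; (2) analyze for which $u\in\{0,\dots,m-1\}$ and which $S$ this holds — the vertex $m-1$ forces $u\in N_{G_m}[m-1]=\{m-1,\lambda_1,\dots,\lambda_k\}$, and for each such $u=\lambda_t$ the set $N_{G_m}[\lambda_t]\cap\{\lambda_1,\dots,\lambda_k\}$ is computable explicitly from the binary structure (two of the $\lambda$'s, namely $\lambda_s,\lambda_t$, are at Hamming distance $2$ from each other, so $\lambda_s\in N_{G_m}[\lambda_t]$ iff the string obtained differs in one coordinate — which is never, so actually $N_{G_m}[\lambda_t]\cap\{\lambda_j\}_j=\{\lambda_t\}$), giving that the only faces of the link are the singletons $\{\lambda_j\}$ plus possibly edges coming from $u=m-1$; (3) conclude the link is a graph on $k$ vertices $\{\lambda_1,\dots,\lambda_k\}$ whose edges and triangles are dictated by the generator $N_{G_m}[m-1]$ together with overlaps with $N_{G_{m-1}}$-generators, compute its first Betti number to be $(k-1)^2$, and verify it includes into $\N(G_{m-1},2)=\N(G_m,2)\setminus(m-1)$ null-homotopically because the ambient complex fills in all the relevant $2$-cells; (4) apply \Cref{lem:splitting}. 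The delicate combinatorial bookkeeping in steps (2)–(3) — getting the edge/triangle structure of the link exactly right so that $\beta_1=(k-1)^2$, and confirming the null-homotopy — is where essentially all the work lies; everything else (the base case, the identification of the deletion, the final invocation of \Cref{lem:splitting} and \Cref{thm:main_for_index_2} via $m=2^n$, $k=1+\#\{i:\text{bit }i\text{ of }2^n-1\}=n$ so $\alpha$ accumulates to $2^{n-2}(n^2-3n+4)-1$ by a telescoping sum) is routine.
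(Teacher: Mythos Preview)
There are two genuine errors in your computation, and together they account for the contradiction you ran into.

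\textbf{The deletion is not $\N(G_{m-1},2)$.} When you delete $m-1$ from the generators $N_{G_m}[u]$ of $\N(G_m,2)$, you only treated $u\le m-2$. You forgot the generator $N_{G_m}[m-1]=\{m-1,\lambda_1,\dots,\lambda_k\}$: removing $m-1$ from it leaves the full $(k-1)$-simplex $\Delta^{N_{G_m}(m-1)}$ on $\{\lambda_1,\dots,\lambda_k\}$. For $k\ge 3$ this simplex is \emph{not} a face of $\N(G_{m-1},2)$, since no closed neighborhood $N_{G_{m-1}}[u]$ contains three of the $\lambda_j$ (any two of them are at Hamming distance $2$, and the unique common neighbor $\lambda_{r,s}$ of $\lambda_r,\lambda_s$ is not adjacent to any other $\lambda_t$). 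So in fact
\[
\N(G_m,2)\setminus(m-1)=\N(G_{m-1},2)\cup\Delta^{N_{G_m}(m-1)},
\]
and the intersection of the two pieces is the $1$-skeleton of $\Delta^{N_{G_m}(m-1)}$. An application of \Cref{lem:complex_union} (using the inductive hypothesis that $\N(G_{m-1},2)$ is a wedge of $2$-spheres) gives $\N(G_m,2)\setminus(m-1)\simeq \N(G_{m-1},2)\vee\bigvee_{\binom{k-1}{2}}\bs^2$.

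\textbf{The link is not supported only on $\{\lambda_1,\dots,\lambda_k\}$.} Your claim that ``the only maximal simplex of $\N(G_m,2)$ containing $m-1$ is $N_{G_m}[m-1]$'' is false: since $m-1\sim\lambda_j$, the vertex $m-1$ also lies in each $N_{G_m}[\lambda_j]$. Hence $\lk_{\N(G_m,2)}(m-1)$ has $k+1$ maximal simplices, namely $N_{G_m}(m-1)=\{\lambda_1,\dots,\lambda_k\}$ together with $N_{G_{m-1}}[\lambda_j]$ for $j\in[k]$; its vertex set is much larger than $\{\lambda_1,\dots,\lambda_k\}$. Any two of these $k+1$ simplices meet in a single point and any three have empty intersection, so by the nerve theorem the link is homotopy equivalent to the $1$-skeleton of a $k$-simplex, i.e.\ $\bigvee_{\binom{k}{2}}\bs^1$ --- not a graph with $\beta_1=(k-1)^2$ as your revised plan anticipated.

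Putting these together via \Cref{lem:splitting} yields $\binom{k-1}{2}+\binom{k}{2}=(k-1)^2$ copies of $\bs^2$, which is where $\alpha(m-1)$ comes from. Your overall strategy (induction, splitting at $m-1$) matches the paper's, but the two bookkeeping errors above need fixing before the argument goes through.
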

\begin{proof}
The first part of \Cref{thm:general_for_index_2} follows from induction and \Cref{eq:inductivestep}.
Therefore it is enough to prove \Cref{eq:inductivestep}.
We will prove this using \Cref{lem:splitting} and by computing the link and deletion of vertex $m-1$ in $\N(G_m,2)$.

Our base case is that $\N(G_2,2)$ is contractible, {\it i.e.}, the $0$-fold wedge sum of $2$-spheres.
This is the same for $\N(G_3,2)$.
From the induction hypothesis, we know that $\N(G_{m-1},2)$ is homotopy equivalent to a wedge of spheres.
We first determine the homotopy type of the deletion complex, {\it i.e.}, $\N(G_m,2)\setminus \{m-1\}$.

\begin{claim}
\label{claim:1}
$\N(G_m,2)\setminus \{m-1\}\simeq \N(G_{m-1},2) \vee \bigvee\limits_{\binom{k-1}{2}}\bs^2$.
\end{claim}

\begin{proof}[Proof of \Cref{claim:1}]
It is easy to observe that
\[\N(G_m,2)\setminus \{m-1\} = \N(G_{m-1},2)\cup \Delta^{N_{G_m}(m-1)}\text{ and }\N(G_{m-1},2)\cap \Delta^{N_{G_m}(m-1)}= (\Delta^{N_{G_m}(m-1)})^{(1)},\]
where $\Delta^S$ denotes the simplex on vertex set $S$ and $(\Delta^S)^{(1)}$ denotes the $1$-dimensional skeleton of this simplex.
The $1$-skeleton of a $(k-1)$-dimensional simplex is homotopy equivalent to a wedge of $\binom{k-1}{2}$ circles, since it is a connected graph with Euler characteristic $k-\binom{k}{2}=\binom{k-1}{2}-1$.
Hence
\[(\Delta^{N_{G_m}(m-1)})^{(1)}\simeq \bigvee\limits_{\binom{k-1}{2}} \bs^1.\]
From the induction hypothesis, we know that $\N(G_{m-1},2)$ is a wedge of $2$-dimensional spheres and therefore the inclusion map $\iota \colon (\Delta^{N_{G_m}(m-1)})^{(1)} \hookrightarrow \N(G_{m-1},2)$ is null-homotopic.
Hence, using \Cref{lem:complex_union}, we get the following.
\begin{equation*}
\begin{split}
\N(G_m,2)\setminus \{m-1\} & \simeq \N(G_{m-1},2)\vee \Delta^{N_{G_m}(m-1)} \vee \susp\left(\bigvee\limits_{\binom{k-1}{2}} \bs^1\right)\\
& \simeq \N(G_{m-1},2) \vee \bigvee\limits_{\binom{k-1}{2}} \bs^2.
\end{split}
\end{equation*}
This completes the proof of \Cref{claim:1}.
\end{proof}

We now determine the homotopy type of the complex $\lk_{\N(G_m,2)}(m-1)$, so that we can use Lemma~\ref{lem:splitting}.
Observe that $\lk_{\N(G_m,2)}(m-1)$ is a simplicial complex whose maximal simplices are $N_{G_{m}}(m-1)$ and $N_{G_{m-1}}[\lambda_j]$ for $j\in [k]$, {\it i.e.}, 
\begin{equation}
\label{eq:cover of link}
\lk_{\N(G_m,2)}(m-1)= \Delta^{N_{G_{m}}(m-1)}\cup \Delta^{N_{G_{m-1}}[\lambda_1]}\cup \Delta^{N_{G_{m-1}}[\lambda_2]}\cup \cdots\cup \Delta^{N_{G_{m-1}}[\lambda_k]}.
\end{equation}  
Recall that the $\lambda_j$'s are neighbors of the vertex $m-1$ in the graph $G_m$, and that $\Delta^S$ denotes the simplex on the vertex set $S$.
For $1\leq r <s \leq k$, let $\lambda_{r,s}$ be the vertex of $G_{m-1}$ whose binary representation is $2^{i_1}+\cdots+2^{i_{r-1}}+2^{i_{r+1}}+\cdots+2^{i_{s-1}}+2^{i_{s+1}}+\cdots+2^{i_k}$.
Note that $\lambda_{r,s}=(m-1)^{i_r,i_s}$; see \Cref{eq:superscript}.
Clearly, $\lambda_{r,s}$ is adjacent to both $\lambda_r$ and $\lambda_s$ in $G_{m-1}$.
We now use \Cref{thm:nerve} and \Cref{eq:cover of link} to determine the homotopy type of $\lk_{\N(G_m,2)}(m-1)$.
The following are some easy observations from the definition of $G_m$.
\begin{enumerate}
\item Every member in the union of the right side of \Cref{eq:cover of link} is a simplex and hence contractible.
\item For any $1\le r <s \leq k$, $\Delta^{N_{G_{m-1}}[\lambda_r]}\cap \Delta^{N_{G_{m-1}}[\lambda_s]}=\Delta^{\{\lambda_{r,s}\}}$, which is a point and therefore contractible.
\item For any $1\leq i \leq k$, $\Delta^{N_{G_{m}}(m-1)}\cap \Delta^{N_{G_{m-1}}[\lambda_i]}=\Delta^{\{\lambda_i\}}$, which is again a point and therefore contractible.
\item The intersection of any three or more members from the union on the right side of \Cref{eq:cover of link} is always empty.
\end{enumerate}
  
The observations above along with \Cref{thm:nerve} imply that the complex $\lk_{\N(G_m,2)}(m-1)$ is homotopy equivalent to the nerve of $\{\Delta^{N_{G_{m}}(m-1)}, \Delta^{N_{G_{m-1}}[\lambda_1]}, \Delta^{N_{G_{m-1}}[\lambda_2]}, \dots, \Delta^{N_{G_{m-1}}[\lambda_k]}\}$, which is homotopy equivalent to the $1$-dimensional skeleton of a $k$-simplex on $k+1$ vertices.
Therefore,
\[\lk_{\N(G_m,2)}(m-1)\simeq (\Delta^k)^{(1)}\simeq \bigvee\limits_{\binom{k}{2}}\bs^1.\]
  
Thus, from induction and \Cref{claim:1}, we get that the inclusion map $\iota : \lk_{\N(G_m,2)}(m-1) \hookrightarrow \N(G_m,2)\setminus \{m-1\}$ is a null-homotopy (since the latter is homotopy equivalent to a wedge of $2$-dimensional spheres).
Hence, \Cref{lem:splitting} implies the following.
\begin{equation*}
\begin{split}
\N(G_m,2) & \simeq (\N(G_m,2)\setminus \{m-1\}) \vee \susp \lk_{\N(G_m,2)}(m-1)\\
& \simeq \N(G_{m-1},2)\vee \left(\bigvee\limits_{\binom{k-1}{2}}\bs^2\right) \vee \left(\bigvee\limits_{\binom{k}{2}}\bs^2\right)\\
& = \N(G_{m-1},2)\vee \bigvee\limits_{(k-1)^2}\bs^2.
\end{split}
\end{equation*}
This completes the proof of \Cref{thm:general_for_index_2}.
\end{proof}

The proof of \Cref{thm:main_for_index_2} now follows as a special case.

\begin{proof}[Proof of \Cref{thm:main_for_index_2}]
\Cref{thm:general_for_index_2} implies that $\N(\I_n,2) \simeq \bigvee\limits_{\beta(n)}\bs^2$, where
\begin{equation*}
\begin{split}
\beta(n)& = \sum\limits_{m=2}^{2^n} \alpha(m-1) = \sum\limits_{k=1}^{n} \binom{n}{k}(k-1)^2 = \sum\limits_{k=1}^{n} \binom{n}{k}k^2 - 2 \sum\limits_{k=1}^{n} \binom{n}{k} k + \sum\limits_{k=1}^{n} \binom{n}{k} \\
& = \sum\limits_{k=0}^{n} \binom{n}{k}k^2 - 2 \sum\limits_{k=0}^{n} \binom{n}{k} k + \sum\limits_{k=0}^{n} \binom{n}{k}-1 = 2^{n-2}(n^2+n)-2\cdot 2^{n-1}n+2^n-1\\ 
& = 2^{n-2}(n^2-3n+4)-1.
\end{split}
\end{equation*}
\end{proof}

\section{The case of $r=3$}
\label{sec:r=3}

In this section, we prove two main results about $\N(\I_n,r)$ at scale $r=3$.
First, we prove that $\N(\I_n,3)$ is homotopy equivalent to a simplicial complex of dimension at most 4 (\Cref{thm:collapsibility}).
Second, for $n\geq 4$, we show that the reduced homology $\tilde{H}_i(\N(\I_n, 3);\Z)$ is nonzero if and only if $i \in \{3, 4\}$ (\Cref{thm:main3}).
We remind the reader that the cases $n=1,2,3$ are understood, since $\N(\I_1,3)$ and $\N(\I_2,3)$ are contractible, and since $\N(\I_3,3) \simeq \vee_3 \bs^4$ by \cite{conant2010boolean}.

\begin{remark}
For any $n \geq 1$, let $\X_n = \N(\I_n, 3)$.
From \Cref{lem:maximal-simplices}, the maximal simplices of $\X_n$ will be of the type 
$N_1[v] \cup N_1[w]$ for some $v, w \in V(\I_n)$ that are adjacent, {\it i.e.}\ $ v \sim w$.
Since $v\sim w$, we remark that $v\in N(w)$ and $w\in N(v)$, and therefore $N_1[v] \cup N_1[w] = N(v) \cup N(w)$ when $v \sim w$.
\end{remark}

For each $i \in [n]$ and $\epsilon \in \{0, 1\}$, let $\I_n^{i, \epsilon}$ be the induced subgraph of $\I_n$ on the vertex set $\{v \in V(\I_n) : v(i) = \epsilon \}$.
Observe that we have an isomorphism $\I_n^{i, \epsilon} \cong \I_{n-1}$.

\subsection{Collapsibility}
\label{subsec:collapsibility}

We describe the concept of simplicial collapsibility, which will play an important role in the proof of \Cref{thm:main3}.
For more details, we refer the reader to \cite{Lew2018, Wegner1975}.
A standard notational convention is that if $S$ is a finite set, then we let $|S|$ denote the size of this set.

Let $X$ be a finite simplicial complex.
Let $\sigma \in X$ be simplex such that $|\sigma|\leq d$.
Suppose there is a unique maximal simplex $\gamma \in X$ that contains $\sigma$.  
Then an {\it elementary $d$-collapse} of $X$ is the subcomplex $Y$ of $X$ which  obtained by
removing all those simplices $\delta$ of $X$ satisfying
$\sigma \subseteq \delta \subseteq \gamma$.
We denote this elementary $d$-collapse by  $X \xrightarrow{\sigma} Y$.

We remark that if $\gamma$ is not equal to $\sigma$, then the elementary $d$-collapse $X \xrightarrow{\sigma} Y$ is a simplicial collapse, and therefore does not change the homotopy type of the space.
However, a $d$-collapse allows the possibility that $\gamma=\sigma$; in this case performing the $d$-collapse changes the homotopy type.

The complex $X$ is called \emph{$d$-collapsible} if for some integer $k$, there exists a sequence of elementary $d$-collapses 
\[
X=X_0\xrightarrow{\sigma_1} X_1 \xrightarrow{\sigma_2} \cdots \xrightarrow{\sigma_{k}} X_k=\emptyset
\]
from $X$ to the void complex $\emptyset$.
It is easy to check that if $X$ is $d$-collapsible and $d < c$, then $X$ is $c$-collapsible. 

\begin{definition}
For a simplicial complex $X$, the \emph{collapsibility number} of $X$ is the minimal integer $d$ such that $X $ is $d$-collapsible.
\end{definition}

\begin{example}\label{example:collapsibility}
Let $X$ be the simplicial complex on the vertex set $\{1, 2, 3, 4\}$ whose set of maximal simplices is $\{\{1, 2, 4\}, \{2, 3\}, \{3, 4\}\}$; see \Cref{Collapsibility_Figure}(A).
Then we have the following sequence of elementary $2$-collapses from $X$ to $\emptyset$ (see \Cref{Collapsibility_Figure}):
\[
X\xrightarrow{\{1\}} X_1 \xrightarrow{\{2, 3\}} X_2 \xrightarrow{\{3\}} X_3 \xrightarrow{\{2\}} X_4 \xrightarrow{\{4\}}  \emptyset.
\]
Hence $X$ is $2$-collapsible.
One can check that $X$ is not $1$-collapsible.
Thus the collapsibility number of $X$ is $2$.
\end{example}
	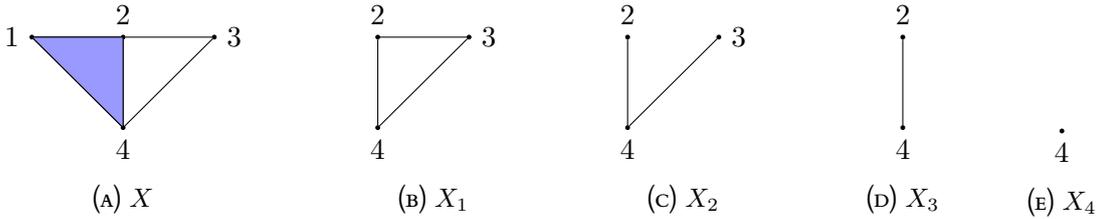
\begin{figure}[H]
		\begin{subfigure}[]{0.3\textwidth}
		\centering
		\begin{tikzpicture}
			[scale=0.4, vertices/.style={draw, fill=black, circle, inner sep=0.5pt}]
			%\node[vertices, label=left:{$3$}] (a) at (1,7.5) {};
		%	\node[vertices, label=right:{$5$}] (b) at (3,5.8) {};
			\node[vertices, label=left:{$1$}] (a) at (1, 1) {};
		%	\node[vertices, label=right:{$4$}] (d) at (4.7,7.9) {};
		%	\node[vertices, label=left:{$1$}] (e) at (1,1) {};
			%	\node[vertices, label=right:{$9$}] (f) at (4.4,1) {};
			\node[vertices, label=above:{$2$}] (b) at (4, 1) {};
			\node[vertices, label=right:{$3$}] (c) at (7, 1) {};
			\node[vertices, label=below:{$4$}] (d) at (4, -2) {};
			\foreach \to/\from in {a/b, b/c, b/d, c/d, a/d}
		\draw [-] (\to)--(\from);
		%	\filldraw[fill=green!40, draw=black] (1,7.5)--(2.9,6.3)--(2.7,3)--cycle;
		%	\filldraw[fill=blue!20, draw=black] (3,5.8)--(2.7,3)--(4.7,7.9)--cycle;
		%	\filldraw[fill=red!40, draw=black] (1,7.5)--(2.9,5.8)--(4.7,7.9)--cycle;
			%	\filldraw[fill=green!60, draw=black] (1,1)--(2.7,3)--(4.4,1)--cycle;
			\filldraw[fill=blue!40, draw=black] (1, 1)--(4, 1)--(4, -2)--cycle;
		%	\draw [dashed] (a)--(b);
			
		\end{tikzpicture}
		\caption{$X$} \label{a}
		\end{subfigure}
		\begin{subfigure}[]{0.2\textwidth}
				\centering
			\begin{tikzpicture}
				[scale=0.4, vertices/.style={draw, fill=black, circle, inner sep=0.5pt}]
				%\node[vertices, label=left:{$3$}] (a) at (1,7.5) {};
				%	\node[vertices, label=right:{$5$}] (b) at (3,5.8) {};
			%	\node[vertices, label=left:{$1$}] (a) at (1, 1) {};
				%	\node[vertices, label=right:{$4$}] (d) at (4.7,7.9) {};
				%	\node[vertices, label=left:{$1$}] (e) at (1,1) {};
				%	\node[vertices, label=right:{$9$}] (f) at (4.4,1) {};
				\node[vertices, label=above:{$2$}] (b) at (4, 1) {};
				\node[vertices, label=right:{$3$}] (c) at (7, 1) {};
				\node[vertices, label=below:{$4$}] (d) at (4, -2) {};
				\foreach \to/\from in { b/c, b/d, c/d}
				\draw [-] (\to)--(\from);
				%	\filldraw[fill=green!40, draw=black] (1,7.5)--(2.9,6.3)--(2.7,3)--cycle;
				%	\filldraw[fill=blue!20, draw=black] (3,5.8)--(2.7,3)--(4.7,7.9)--cycle;
				%	\filldraw[fill=red!40, draw=black] (1,7.5)--(2.9,5.8)--(4.7,7.9)--cycle;
				%	\filldraw[fill=green!60, draw=black] (1,1)--(2.7,3)--(4.4,1)--cycle;
			%	\filldraw[fill=blue!40, draw=black] (1, 1)--(4, 1)--(4, -2)--cycle;
				%	\draw [dashed] (a)--(b);
				
			\end{tikzpicture}
			\caption{$X_1$} \label{b}
			\end{subfigure}
				\begin{subfigure}[]{0.2\textwidth}
				\centering
				\begin{tikzpicture}
					[scale=0.4, vertices/.style={draw, fill=black, circle, inner sep=0.5pt}]
					%\node[vertices, label=left:{$3$}] (a) at (1,7.5) {};
					%	\node[vertices, label=right:{$5$}] (b) at (3,5.8) {};
					%	\node[vertices, label=left:{$1$}] (a) at (1, 1) {};
					%	\node[vertices, label=right:{$4$}] (d) at (4.7,7.9) {};
					%	\node[vertices, label=left:{$1$}] (e) at (1,1) {};
					%	\node[vertices, label=right:{$9$}] (f) at (4.4,1) {};
					\node[vertices, label=above:{$2$}] (b) at (4, 1) {};
					\node[vertices, label=right:{$3$}] (c) at (7, 1) {};
					\node[vertices, label=below:{$4$}] (d) at (4, -2) {};
					\foreach \to/\from in {b/d, c/d}
					\draw [-] (\to)--(\from);
					%	\filldraw[fill=green!40, draw=black] (1,7.5)--(2.9,6.3)--(2.7,3)--cycle;
					%	\filldraw[fill=blue!20, draw=black] (3,5.8)--(2.7,3)--(4.7,7.9)--cycle;
					%	\filldraw[fill=red!40, draw=black] (1,7.5)--(2.9,5.8)--(4.7,7.9)--cycle;
					%	\filldraw[fill=green!60, draw=black] (1,1)--(2.7,3)--(4.4,1)--cycle;
					%	\filldraw[fill=blue!40, draw=black] (1, 1)--(4, 1)--(4, -2)--cycle;
					%	\draw [dashed] (a)--(b);
					
				\end{tikzpicture}
				\caption{$X_2$} \label{c}
			\end{subfigure}
				\begin{subfigure}[]{0.15\textwidth}
				\centering
				\begin{tikzpicture}
					[scale=0.4, vertices/.style={draw, fill=black, circle, inner sep=0.5pt}]
					%\node[vertices, label=left:{$3$}] (a) at (1,7.5) {};
					%	\node[vertices, label=right:{$5$}] (b) at (3,5.8) {};
					%	\node[vertices, label=left:{$1$}] (a) at (1, 1) {};
					%	\node[vertices, label=right:{$4$}] (d) at (4.7,7.9) {};
					%	\node[vertices, label=left:{$1$}] (e) at (1,1) {};
					%	\node[vertices, label=right:{$9$}] (f) at (4.4,1) {};
					\node[vertices, label=above:{$2$}] (b) at (4, 1) {};
				%	\node[vertices, label=right:{$3$}] (c) at (7, 1) {};
					\node[vertices, label=below:{$4$}] (d) at (4, -2) {};
					\foreach \to/\from in {b/d}
					\draw [-] (\to)--(\from);
					%	\filldraw[fill=green!40, draw=black] (1,7.5)--(2.9,6.3)--(2.7,3)--cycle;
					%	\filldraw[fill=blue!20, draw=black] (3,5.8)--(2.7,3)--(4.7,7.9)--cycle;
					%	\filldraw[fill=red!40, draw=black] (1,7.5)--(2.9,5.8)--(4.7,7.9)--cycle;
					%	\filldraw[fill=green!60, draw=black] (1,1)--(2.7,3)--(4.4,1)--cycle;
					%	\filldraw[fill=blue!40, draw=black] (1, 1)--(4, 1)--(4, -2)--cycle;
					%	\draw [dashed] (a)--(b);
					
				\end{tikzpicture}
				\caption{$X_3$} \label{d}
			\end{subfigure}
   	\begin{subfigure}[]{0.1\textwidth}
    \vspace{ 1.8 cm}
				\centering
				\begin{tikzpicture}
					[scale=0.4, vertices/.style={draw, fill=black, circle, inner sep=0.5pt}]
					%\node[vertices, label=left:{$3$}] (a) at (1,7.5) {};
					%	\node[vertices, label=right:{$5$}] (b) at (3,5.8) {};
					%	\node[vertices, label=left:{$1$}] (a) at (1, 1) {};
					%	\node[vertices, label=right:{$4$}] (d) at (4.7,7.9) {};
					%	\node[vertices, label=left:{$1$}] (e) at (1,1) {};
					%	\node[vertices, label=right:{$9$}] (f) at (4.4,1) {};
					%\node[vertices, label=above:{$2$}] (b) at (4, 1) {};
				%	\node[vertices, label=right:{$3$}] (c) at (7, 1) {};
					\node[vertices, label=below:{$4$}] (d) at (4, -2) {};
				%	\foreach \to/\from in {b/d}
				%	\draw [-] (\to)--(\from);
					%	\filldraw[fill=green!40, draw=black] (1,7.5)--(2.9,6.3)--(2.7,3)--cycle;
					%	\filldraw[fill=blue!20, draw=black] (3,5.8)--(2.7,3)--(4.7,7.9)--cycle;
					%	\filldraw[fill=red!40, draw=black] (1,7.5)--(2.9,5.8)--(4.7,7.9)--cycle;
					%	\filldraw[fill=green!60, draw=black] (1,1)--(2.7,3)--(4.4,1)--cycle;
					%	\filldraw[fill=blue!40, draw=black] (1, 1)--(4, 1)--(4, -2)--cycle;
					%	\draw [dashed] (a)--(b);
					
				\end{tikzpicture}
				\caption{$X_4$} \label{d}
			\end{subfigure}
   \caption{The simplicial complex $X$ is $2$-collapsible.}\label{Collapsibility_Figure}
	\end{figure}
We have the following consequence of $d$-collapsibility of a simplicial complex.

\begin{proposition}[\cite{Wegner1975}]
\label{prop:collapsibilitysubcomplex}
If $X$ is $d$-collapsible then it is homotopy equivalent to a simplicial complex of dimension smaller than $d$.
\end{proposition}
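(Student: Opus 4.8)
The plan is to extract the dimension bound directly from a defining sequence of elementary $d$-collapses $X = X_0 \xrightarrow{\sigma_1} X_1 \xrightarrow{\sigma_2} \cdots \xrightarrow{\sigma_k} X_k = \emptyset$, after first classifying its steps. For each $i$, let $\gamma_i$ be the unique maximal simplex of $X_{i-1}$ containing $\sigma_i$, and call the step $X_{i-1}\xrightarrow{\sigma_i} X_i$ a \emph{collapse step} if $\sigma_i\subsetneq\gamma_i$ and a \emph{deletion step} if $\sigma_i=\gamma_i$. As observed in the paragraph preceding the statement, a collapse step is an ordinary simplicial collapse: the set $\{\delta : \sigma_i\subseteq\delta\subseteq\gamma_i\}$ is a nonempty Boolean interval of the face poset, which can be peeled off by a sequence of free-face collapses, so the inclusion $X_i\hookrightarrow X_{i-1}$ is a deformation retract, hence a homotopy equivalence. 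A deletion step merely removes the single maximal simplex $\sigma_i=\gamma_i$, of dimension $e_i := |\sigma_i|-1 \le d-1$; thus $X_{i-1} = X_i \cup \{\sigma_i\}$, and geometrically $X_{i-1}$ is obtained from $X_i$ by attaching a cell $D^{e_i}$ along the inclusion $\partial\sigma_i\hookrightarrow X_i$ of its boundary, which is already a subcomplex of $X_i$.

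Next I would run the sequence in reverse. Since each collapse only removes simplices, the complexes are nested, $\emptyset = X_k\subseteq X_{k-1}\subseteq\cdots\subseteq X_0 = X$, and I would build, by downward induction on $i$, a finite CW complex $W_i$ with $\dim W_i \le d-1$ together with a homotopy equivalence $W_i\simeq X_i$. Take $W_k=\emptyset$. If $X_{i-1}$ comes from $X_i$ by a collapse step, set $W_{i-1}:=W_i$. If it comes by a deletion step, transport the attaching map $\partial\sigma_i\hookrightarrow X_i$ across the homotopy equivalence $X_i\simeq W_i$ to obtain a map $S^{e_i-1}\to W_i$, replace it by a homotopic cellular map $\psi$ using cellular approximation, and set $W_{i-1}:=W_i\cup_\psi D^{e_i}$. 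Then $\dim W_{i-1}\le\max(\dim W_i, e_i)\le d-1$, and $W_{i-1}\simeq X_{i-1}$ because attaching cells along homotopic attaching maps — here along attaching maps that correspond under a homotopy equivalence of the base — produces homotopy equivalent spaces (the standard gluing argument for cofibrations). At the end, $W_0$ is a finite CW complex with $W_0\simeq X$ and $\dim W_0 \le d-1 < d$.

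It remains to pass from the CW complex $W_0$ to a simplicial one of the same dimension, which I would do by invoking the standard fact that every finite CW complex of dimension $m$ is homotopy equivalent to a finite simplicial complex of dimension $m$ (built skeleton by skeleton, replacing attaching maps by simplicial ones on subdivided spheres and forming simplicial mapping cones). Alternatively, one can keep the whole argument simplicial from the start: at each deletion step, instead of attaching a topological cell, approximate the attaching map by a simplicial map after subdividing the boundary sphere and form the simplicial mapping cone, whose dimension is again $\max(\dim W_i, e_i)$. I expect the combinatorial heart — that collapse steps are homotopy equivalences and deletion steps cost exactly one cell of dimension at most $d-1$ — to be essentially immediate from the definitions; the only real work is the homotopy-theoretic bookkeeping in the reverse induction (keeping attaching maps cellular and dimensions controlled while transporting them across homotopy equivalences) together with the final CW-to-simplicial conversion.
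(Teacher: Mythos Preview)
The paper does not give a proof of this proposition: it is quoted directly from Wegner's 1975 paper and used as a black box, so there is no ``paper's own proof'' to compare against.

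Your argument is correct and is essentially the standard way to extract the dimension bound from a $d$-collapsing sequence. The dichotomy between collapse steps ($\sigma_i\subsetneq\gamma_i$, a genuine simplicial collapse and hence a homotopy equivalence) and deletion steps ($\sigma_i=\gamma_i$, attaching a cell of dimension $|\sigma_i|-1\le d-1$) is exactly the right observation, and the reverse induction building $W_i\simeq X_i$ with $\dim W_i\le d-1$ is sound: the gluing lemma for cofibrations handles transporting the attaching map across the homotopy equivalence $X_i\simeq W_i$, and cellular approximation keeps the dimension under control. The only places where you are waving your hands slightly are the verification that the Boolean interval $[\sigma_i,\gamma_i]$ can be removed by free-face collapses (true: pair $\delta$ with $\delta\cup\{v\}$ for a fixed $v\in\gamma_i\setminus\sigma_i$ and process pairs in decreasing order of $|\delta|$) and the final CW-to-simplicial conversion preserving dimension, but both are standard and you flag them appropriately.
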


We now define a minimal exclusion sequence. Let $X$ be a simplicial complex on the vertex set $[n]$.
Suppose we fix a linear order $\prec$ on the set of maximal simplices of $X$.
Let $ \gamma_1 \prec \ldots \prec \gamma_m$ be the maximal simplices of $X$, ordered as such.
Let $\gamma \in X$.
The \textit{minimal exclusion sequence} $\mes_{\prec}(\gamma)$ is a sequence of vertices from $\gamma$, defined as follows.
Let $j$ be the smallest index such that $\gamma \subseteq \gamma_j$.
If $j=1$, then $\mes_{\prec}(\gamma)$ is the null sequence.
If $j\ge 2$, then $\mes_{\prec}(\gamma)=(a_1,\ldots, a_{j-1})$ is the finite sequence of length $j-1$ such that
$a_1=\min (\gamma \setminus \gamma_1) \in  [n]$ and  for each $k\in\{2, \ldots, j-1\}$, 
\[a_k=\begin{cases}
\min(\{a_1,\dots,a_{k-1}\}\cap (\gamma \setminus \gamma_k)) & \text{if } \{a_1,\dots,a_{k-1}\}\cap (\gamma \setminus \gamma_k)\neq\emptyset,\\
\min (\gamma\setminus \gamma_k) & \text{otherwise.}
\end{cases} \]

Let $M_{\prec}(\gamma)\subseteq \gamma$ denote the set of vertices that appear in $\mes_{\prec}(\gamma)$.
Define
\[d_{\prec}(X)\coloneqq\max_{\gamma \in X}|M_{\prec}(\gamma)|.\]

\begin{proposition}[Theorem~6 of~\cite{Lew2018}]
\label{prop:minimalexclusion}
If $\prec$ is a linear ordering of the maximal simplices of $X$, then $X$ is $d_{\prec}(X)$-collapsible.
\end{proposition}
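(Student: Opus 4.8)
The plan is to prove the statement by induction on the number $m$ of maximal simplices of $X$, written $\gamma_1\prec\cdots\prec\gamma_m$. For $\gamma\in X$ let $j(\gamma)$ be the least index with $\gamma\subseteq\gamma_{j(\gamma)}$, so that $\mes_{\prec}(\gamma)$ has length $j(\gamma)-1$. When $m\le 1$ every face has $j(\gamma)=1$, hence $M_{\prec}(\gamma)=\emptyset$ and $d_{\prec}(X)=0$, and a simplex (or the void complex) is $0$-collapsible via the single elementary $0$-collapse with $\sigma=\emptyset$. For $m\ge 2$, put $F=\{\gamma\in X: j(\gamma)=m\}$, the faces ``first covered'' by $\gamma_m$. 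A short check from the definitions shows that $X\setminus F$ is a subcomplex whose maximal simplices are exactly $\gamma_1\prec\cdots\prec\gamma_{m-1}$, and that for $\gamma\in X\setminus F$ the sequence $\mes_{\prec}(\gamma)$ is computed from $\gamma_1,\dots,\gamma_{j(\gamma)}$ with $j(\gamma)\le m-1$, hence is unchanged in the passage from $X$ to $X\setminus F$; in particular $d_{\prec}(X\setminus F)\le d_{\prec}(X)$. By the inductive hypothesis $X\setminus F$ is $d_{\prec}(X)$-collapsible, so it suffices to exhibit a sequence of elementary $d_{\prec}(X)$-collapses carrying $X$ down to $X\setminus F$, i.e.\ to ``peel off'' $F$.

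The engine of the peeling step is a monotonicity property of minimal exclusion sequences, which I would isolate as two lemmas. \emph{Lemma A (stability):} if $\tau=M_{\prec}(\gamma)$ and $\tau\subseteq\gamma'\subseteq\gamma$ with $\gamma'\in X$, then $\mes_{\prec}(\gamma')=\mes_{\prec}(\gamma)$. \emph{Lemma B (comparison):} if $\tau\subseteq\delta$ are faces of $X$ with $j(\tau)=j(\delta)$, then $\mes_{\prec}(\delta)$ comes no later than $\mes_{\prec}(\tau)$ in the lexicographic order, with equality only when the two sequences coincide. Both are proved the same way: since every entry of $\mes_{\prec}(\gamma)$ lies in $M_{\prec}(\gamma)\subseteq\gamma$ and is a vertex omitted from the appropriate $\gamma_k$, the larger face shares the same value of $j$; then one inducts on the position $k$ in the sequence, observing that the recursive clause defining the $k$-th entry forces the two sequences to stay equal whenever the ``reuse an earlier witness'' alternative is taken, and otherwise can only decrease the entry belonging to the larger face. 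Lemma A yields three facts I would use: (i) $M_{\prec}$ is idempotent, so every $\tau$ in its image is itself a face with $M_{\prec}(\tau)=\tau$ and therefore $|\tau|=|M_{\prec}(\tau)|\le d_{\prec}(X)$; (ii) for such $\tau$ the fiber $M_{\prec}^{-1}(\tau)$ is exactly the face-poset interval $[\tau,g(\tau)]$, where $g(\tau)$ is the union of the faces in the fiber (which again has $M_{\prec}(g(\tau))=\tau$); and (iii) $F$ is the disjoint union of those intervals $[\tau,g(\tau)]$ with $j(\tau)=m$, it is an order filter of the Boolean lattice $2^{\gamma_m}$ with top $\gamma_m$, and each minimal face $\sigma$ of $F$ satisfies $M_{\prec}(\sigma)=\sigma$ and lies in no maximal simplex of $X$ other than $\gamma_m$.

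The peeling itself then goes as follows: order the intervals $[\tau,g(\tau)]$ with $j(\tau)=m$ by increasing lexicographic order of $\mes_{\prec}(\tau)$, and remove them from $X$ in that order. When the interval $[\tau,g(\tau)]$ is reached, any face $\delta\in X$ with $\tau\subseteq\delta$ and $\delta\not\subseteq g(\tau)$ has $M_{\prec}(\delta)\neq\tau$ and $j(M_{\prec}(\delta))=m$, and by Lemmas A and B its sequence $\mes_{\prec}(M_{\prec}(\delta))=\mes_{\prec}(\delta)$ is lexicographically smaller than $\mes_{\prec}(\tau)$, so the fiber of $M_{\prec}(\delta)$ has already been removed; meanwhile all of $[\tau,g(\tau)]$ is still present, so $g(\tau)$ is the unique maximal face of the current complex containing $\tau$. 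Since $|\tau|\le d_{\prec}(X)$, deleting $[\tau,g(\tau)]$ is a legitimate elementary $d_{\prec}(X)$-collapse. After all such $\tau$ are processed the complex has become $X\setminus F$, and the outer induction concludes. The main obstacle is the proof of Lemmas A and B: everything rests on the fact that a minimal exclusion sequence depends only on $M_{\prec}(\gamma)$ and behaves monotonically when the face is enlarged, together with the observation that the lexicographic order on the $\mes_{\prec}(\tau)$ is precisely the order in which the fibers inside $F$ can be legally collapsed. The remaining verifications—that $X\setminus F$ has the stated maximal simplices, that the $M_{\prec}$-fibers partition the face poset into intervals, and so on—are routine unwindings of the definitions.
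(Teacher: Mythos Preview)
The paper does not supply its own proof of this proposition: it is quoted verbatim as Theorem~6 of \cite{Lew2018} and used as a black box. So there is no ``paper's proof'' to compare against.

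That said, your argument is essentially correct and is in fact a reconstruction of Lew's original proof. The induction on the number of maximal simplices, the identification of the filter $F=\{\gamma:j(\gamma)=m\}$ to be peeled off, and the partition of $F$ into the $M_{\prec}$-fibers ordered by the lexicographic order of $\mes_{\prec}$ are exactly the ingredients Lew uses. Your Lemma~A (that $\mes_{\prec}(\gamma)$ depends only on $M_{\prec}(\gamma)$) and Lemma~B (monotonicity in the lexicographic order under enlarging the face) are the right technical pivots, and they do hold: the inductive verification you sketch goes through because, once the first $k-1$ entries of the two sequences agree, the set $\{a_1,\dots,a_{k-1}\}$ lies inside $\tau$, so intersecting it with $\delta\setminus\gamma_k$ or with $\tau\setminus\gamma_k$ gives the same answer.

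One point you leave implicit but which deserves a line: the fiber $M_{\prec}^{-1}(\tau)$ really is a Boolean interval $[\tau,g(\tau)]$, i.e.\ it is closed under unions, not just under taking intermediate faces. This follows from the same entry-by-entry induction (if $M_{\prec}(\delta_1)=M_{\prec}(\delta_2)=\tau$ then $\min((\delta_1\cup\delta_2)\setminus\gamma_k)=\min(a_k,a_k)=a_k$ at each ``fresh'' step), and without it the elementary collapse $[\tau,g(\tau)]$ might overshoot the fiber. Once that is checked, your peeling order is valid and the induction closes exactly as you say.
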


\begin{example}
Let $X$ be the simplicial complex of \Cref{example:collapsibility}.
Let $\{1, 2,4\} \prec \{2, 3\} \prec \{3, 4\}$ be a linear order on the maximal simplices.
Since there are only $3$ maximal simplices,  $\mes_{\prec}(\gamma)$ is a sequence of length at most $2$ for any simplex $\gamma \in X$.
Thus $d_{\prec}(X) \leq 2$.
One can check that $\mes_{\prec}(\{3, 4\}) = (3, 4)$, and so $M_{\prec}(\{3,4\})=\{3,4\}$.
Hence $d_{\prec}(X) = 2$ and therefore \Cref{prop:minimalexclusion} implies that  $X$ is $2$-collapsible.
\end{example}

We will use \Cref{prop:minimalexclusion} to upper bound the collapsibility number of $\X_n = \N(\I_n, 3)$, and we will use the retracts in \Cref{lem:retraction} to lower bound the collapsibility number.

\begin{lemma}
\label{lem:retraction}
Let $n \ge m$ and let $Q$ be an $m$-dimensional cube subgraph of $\I_n$.
Then there exists a retraction of $\X_n$ onto  $\N(Q, 3)$.
\end{lemma}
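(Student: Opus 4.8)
The claim is that for an $m$-dimensional cube subgraph $Q\subseteq\I_n$ (with $n\ge m$), the inclusion $\N(Q,3)\hookrightarrow\X_n=\N(\I_n,3)$ admits a retraction $\rho\colon\X_n\to\N(Q,3)$. I would build $\rho$ at the level of vertex sets and then check it is simplicial. Write $Q$ as the set of vertices agreeing with some fixed $q_0$ on a set of $n-m$ coordinates, say $q_0(i)=0$ for $i\notin S$ where $|S|=m$; so $Q$ consists of all strings supported on $S$. Define the map $\pi\colon V(\I_n)\to V(Q)$ by $\pi(v)(i)=v(i)$ for $i\in S$ and $\pi(v)(i)=0$ for $i\notin S$; in other words, $\pi$ zeroes out the coordinates outside $S$. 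This is the standard coordinate projection onto a face of the cube, it is the identity on $V(Q)$, and it is $1$-Lipschitz for the Hamming metric since zeroing coordinates can only decrease Hamming distance. The plan is to show that this vertex map induces a simplicial map $\X_n\to\N(Q,3)$, which, being the identity on $\N(Q,3)$, is the desired retraction.

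The main step is verifying that $\pi$ sends simplices of $\X_n$ to simplices of $\N(Q,3)$. By \Cref{lem:maximal-simplices} (or the remark following it), it suffices to check this on the maximal simplices, which have the form $N(v)\cup N(w)$ for $v\sim w$ in $\I_n$. First handle the case that the edge $(v,w)$ "lives in $S$", i.e. $v$ and $w$ differ in a coordinate of $S$: then $\pi(v)\sim\pi(w)$ in $Q$ (they differ in that same coordinate of $S$), and since $\pi$ is $1$-Lipschitz, $\pi(N_1[v])\subseteq N_1[\pi(v)]$ and $\pi(N_1[w])\subseteq N_1[\pi(w)]$, so $\pi(N(v)\cup N(w))\subseteq N_1[\pi(v)]\cup N_1[\pi(w)]=N(\pi(v))\cup N(\pi(w))$, a simplex of $\N(Q,3)$. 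The delicate case is when $v$ and $w$ differ in a coordinate $j\notin S$; then $\pi(v)=\pi(w)=:u\in V(Q)$. I need $\pi(N(v)\cup N(w))$ to be a simplex of $\N(Q,3)$, and I claim it is in fact contained in $N_1[u]=N(u)\cup\{u\}$. Indeed, any neighbor of $v$ in $\I_n$ is either $w$ (which maps to $u$), or $v^i$ for $i\ne j$ (which maps to $u^i$ if $i\in S$, or to $u$ if $i\notin S$); so $\pi(N(v))\subseteq N(u)\cup\{u\}=N_1[u]$, and symmetrically for $\pi(N(w))$. Since $N_1[u]$ is a simplex of $\N(\I_{|S|},2)\subseteq$ a simplex of $\N(Q,3)$ (as $N_1[u]\subseteq N(u)\cup N(u')$ for any neighbor $u'$ of $u$ inside $Q$, using $m\ge 1$; the case $m=0$ being trivial), we are done. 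Combining the two cases, $\pi$ is simplicial.

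Having checked that $\pi$ induces a simplicial map $\rho=\pi_*\colon\X_n\to\N(Q,3)$ which restricts to the identity on $\N(Q,3)$, the map $\rho$ is a retraction, completing the proof. I expect the main obstacle — really the only subtle point — to be the second case above (edges pointing out of $S$): one must make sure the collapsed image $\pi(N(v)\cup N(w))$ still sits inside a single generating simplex of $\N(Q,3)$, which is where the bound $N_1[u]\subseteq N(u)\cup N(u')$ is used and where one should double-check the degenerate low-dimensional cases ($m=0$, and $m=1$ where $Q$ is a single edge). Everything else is bookkeeping with the $1$-Lipschitz property of coordinate projections and \Cref{lem:maximal-simplices}.
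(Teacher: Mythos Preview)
Your proposal is correct and follows essentially the same approach as the paper: both define the coordinate projection $V(\I_n)\to V(Q)$ onto the face $Q$ and verify it is simplicial for $\N(-,3)$ by checking the image of each maximal simplex $N(v)\cup N(w)$. The only cosmetic difference is that the paper reduces by composition to the codimension-one case $m=n-1$ (projecting out a single coordinate $i$, setting it to $\epsilon$) and then argues in one line that $\tilde\phi(N(u_1)\cup N(u_2))\subseteq N(\phi(u_1))\cup N(\phi(u_2))$, whereas you project out all $n-m$ coordinates at once and make the case split (edge inside $S$ versus edge outside $S$) explicit; your second case, where $\pi(v)=\pi(w)=u$ and the image lands in $N_1[u]$, is exactly what the paper's argument yields when $\phi(u_1)=\phi(u_2)$.
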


\begin{proof}
The case $n=m$ is clear.
And, it is enough to prove the result for $m = n-1$, since then the general case of $n>m$ can be obtained by composition.
For $m=n-1$, there exist  $i \in [n]$ and $\epsilon \in \{0, 1\}$ such that $Q = \I_n^{i, \epsilon}$.
Define $\phi: V(\I_n) \to  V(Q)$ as follows: for $v \in V(\I_n)$ and $t \in [n]$, let
\[
\phi(v)(t)  = \begin{cases}
\ v(t)& \text{if} \ t \neq i,\\
\ \epsilon & \text{if} \ t =  i.\\
\end{cases}
\]
	
We  extend the map $\phi$ to $\tilde{\phi} : \X_n \to \N(Q, 3)$ by 
$\tilde{\phi}(\sigma) \coloneqq \{\phi(v) : v \in \sigma\}$ for all $\sigma \in \X_n$.
Clearly, $\tilde{\phi}$ is surjective.
Observe that for any $v \sim w$, if $w = v^{i}$ (recall~\eqref{eq:superscript}) then $\phi(v) = \phi(w)$, and $w \neq v^{i}$ implies that $\phi(v) \sim \phi(w)$.
Hence, for $\sigma \subseteq N(u_1) \cup N(u_2)$, where $u_1 \sim u_2$, we get that  $\tilde{\phi}(\sigma) \subseteq N(\phi(u_1)) \cup N(\phi(u_2))$.
Thus, $\tilde{\phi}$ is a simplicial map that does not move the simplices of $\N(Q, 3)$, and hence $\tilde{\phi}$ is a retraction.
\end{proof}

\begin{lemma}
\label{lem:4neighbor}	
Let $\sigma \in \X_n$ be a maximal simplex.
If $|N(v) \cap \sigma| \geq 3$ for some $v$, then $N(v) \subseteq \sigma$.
\end{lemma}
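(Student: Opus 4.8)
The plan is to combine the description of maximal simplices of $\X_n$ (\Cref{lem:maximal-simplices} together with the remark that $N_1[v]\cup N_1[w]=N(v)\cup N(w)$ when $v\sim w$) with an elementary count of the number of common neighbors of two vertices in a hypercube graph. In particular, the maximal simplex $\sigma\in\X_n$ has the form $\sigma=N(a)\cup N(b)$ for some edge $(a,b)\in E(\I_n)$.

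First I would record the following fact about hypercubes: for any $x,y\in V(\I_n)$,
\[
|N(x)\cap N(y)|=\begin{cases}n&\text{if }x=y,\\ 2&\text{if }d(x,y)=2,\\ 0&\text{otherwise.}\end{cases}
\]
Indeed, a common neighbor $w$ of $x$ and $y$ forces $d(x,y)\le d(x,w)+d(w,y)=2$, so the intersection is empty once $d(x,y)\ge 3$; and since $\I_n$ is bipartite, $N(x)$ and $N(y)$ lie in opposite parts of the bipartition when $d(x,y)$ is odd, so the intersection is also empty when $d(x,y)=1$. The case $x=y$ is trivial, and when $d(x,y)=2$ we may write $y=x^{i,j}$ with $i\ne j$ (see \Cref{eq:superscript}); then $N(y)=\{x^i,x^j\}\cup\{x^{i,j,k}:k\in[n]\setminus\{i,j\}\}$, and intersecting with $N(x)=\{x^k:k\in[n]\}$ leaves exactly the two vertices $x^i$ and $x^j$.

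With this in hand the lemma is quick. If $v\in\{a,b\}$, then $N(v)$ is $N(a)$ or $N(b)$, hence contained in $\sigma$, and we are done. If $v\notin\{a,b\}$, I claim $|N(v)\cap\sigma|\le 2$, so the hypothesis $|N(v)\cap\sigma|\ge 3$ forces $v\in\{a,b\}$ after all. Bounding
\[
|N(v)\cap\sigma|\le |N(v)\cap N(a)|+|N(v)\cap N(b)|,
\]
the fact above gives that each summand lies in $\{0,2\}$, because $v\ne a$ and $v\ne b$ rule out the value $n$. Finally, since $\I_n$ is bipartite and $d(a,b)=1$, the distances $d(v,a)$ and $d(v,b)$ have opposite parity, so at most one of them equals $2$; hence at most one of the two summands is nonzero and $|N(v)\cap\sigma|\le 2$.

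I do not expect a real obstacle: the whole argument rests on the two elementary hypercube facts above — the count of common neighbors, and the parity of the distances from an arbitrary vertex to the two endpoints of an edge. The only thing to keep an eye on is the degenerate range $n\le 2$, where $|N(v)|=n\le 2<3$ makes the hypothesis vacuous, so the statement holds trivially there.
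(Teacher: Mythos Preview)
Your proof is correct and takes a genuinely different route from the paper's. The paper argues by direct case analysis: writing $\sigma=N(u)\cup N(w)$ with $u\sim w$ and fixing three specific neighbors $v^1,v^2,v^3\in\sigma$, it first rules out $\{u,w\}\cap\{v^1,v^2,v^3\}=\emptyset$ by chasing coordinates and deriving $d(u,w)\ge 2$, and then, assuming $u=v^1$, rules out $w\ne v$ by checking that $\{v^2,v^3\}$ cannot lie in $N(u)\cup N(w)$. Your argument instead isolates two structural facts about hypercubes --- the exact value of $|N(x)\cap N(y)|$ as a function of $d(x,y)$, and the parity constraint on $d(v,a),d(v,b)$ forced by bipartiteness when $a\sim b$ --- and combines them via a union bound to get $|N(v)\cap\sigma|\le 2$ whenever $v\notin\{a,b\}$. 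This is cleaner and more conceptual: it avoids coordinate-chasing entirely and makes transparent \emph{why} three neighbors force $v\in\{a,b\}$. The paper's approach, by contrast, is self-contained and does not require first establishing the common-neighbor count as a separate lemma.
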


\begin{proof}
Without loss of generality, using the symmetries of the hypercube, assume that $\{v^1, v^2, v^3\} \subseteq \sigma$.
Let $\sigma = N(u) \cup N(w)$, where $u \sim w$.

First, suppose $\{u, w\} \cap   \{v^1, v^2, v^3\} = \emptyset$.
Since $v^1 \in \sigma$, either $v^1 \in N(u)$ or $v^1 \in N(w)$;
without loss of generality assume that $v^1 \in N(u)$.
So $u= v^{1, j_0}$ for some $j_0 \neq 1$.
If $j_0 = 2$, then $v^3 \not\sim u$ and therefore $v^3 \sim w$.
Then $w = v^{3, k_0}$ for some $k_0 \in [n]$.
But then $d(u, w) \geq 2$, a contradiction.
If $j_0 \neq 2$, then $v^2 \not\sim u$ and so $v^2 \sim w$.
But then $d(u, w) \geq 2$ again, a contradiction.

Hence $\{u, w\} \cap   \{v^1, v^2, v^3\} \neq \emptyset$.
Without loss of generality assume that $u \in \{v^1, v^2, v^3\}$ and $u = v^1$.
If $w \neq v$, then there exists $i_0 \neq 1$ such that $w = v^{1, i_0}$.
Clearly $\{v^2, v^3\} \not\subseteq N(v^{1, i_0})=N(w)$.
Since $\{v^2, v^3\}\not\subseteq N(u)$, we see that $\{v^1, v^2, v^3\} \not\subseteq \sigma$, a contradiction.
Thus we conclude that $w = v$, and hence $N(v) \subseteq \sigma$.
\end{proof}

\begin{theorem}
\label{thm:collapsibility}
For $n \geq 4$, the collapsibility number of $\X_n$ is $5$.
\end{theorem}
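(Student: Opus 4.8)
The assertion ``the collapsibility number of $\X_n$ is $5$'' amounts to two claims: that $\X_n$ is $5$-collapsible, and that $\X_n$ is not $4$-collapsible (equivalently, not $d$-collapsible for any $d\le 4$, since $d$-collapsibility implies $(d{+}1)$-collapsibility). I would dispatch the lower bound first, as it is short, and then do the combinatorial work for the upper bound.

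\textit{Lower bound.} The plan is to push the homology of $\X_4$ into $\X_n$ using the retractions of \Cref{lem:retraction}, and then invoke \Cref{prop:collapsibilitysubcomplex}. Fix $n\ge 4$ and choose a $4$-dimensional cube subgraph $Q\cong\I_4$ of $\I_n$. Applying \Cref{lem:retraction} repeatedly gives a retraction $\tilde\phi\colon\X_n\to\N(Q,3)$ onto the subcomplex $\N(Q,3)\cong\N(\I_4,3)$. A retraction induces a split injection on reduced homology, so $\tilde H_4(\N(\I_4,3))$ is a direct summand of $\tilde H_4(\X_n)$; since the computation recorded in \Cref{table} gives $\tilde H_4(\N(\I_4,3))\cong\Z^{24}\ne 0$, we conclude $\tilde H_4(\X_n)\ne 0$. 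Were $\X_n$ $4$-collapsible, \Cref{prop:collapsibilitysubcomplex} would make it homotopy equivalent to a complex of dimension at most $3$, forcing $\tilde H_4(\X_n)=0$, a contradiction. Hence $\X_n$ is not $4$-collapsible.

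\textit{Upper bound.} Here the plan is to apply \Cref{prop:minimalexclusion}: it suffices to fix a linear order $\prec$ on the maximal simplices of $\X_n$ — by \Cref{lem:maximal-simplices} these are the sets $N(u)\cup N(w)$ over edges $\{u,w\}\in E(\I_n)$, so $\prec$ is really an order on $E(\I_n)$ — and show $d_\prec(\X_n)\le 5$, i.e.\ that for every face $\sigma\in\X_n$ the minimal exclusion sequence $\mes_\prec(\sigma)$ involves at most $5$ distinct vertices. I would use an explicit lexicographic-type order on $E(\I_n)$ (for instance, ordering an edge first by the coordinate it flips and then by the binary value of its lower endpoint). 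Fixing $\sigma$, let $\{u_0,w_0\}$ be the $\prec$-first edge with $\sigma\subseteq N(u_0)\cup N(w_0)$. The crucial input is \Cref{lem:4neighbor}: for each vertex $v$ either $|N(v)\cap\sigma|\le 2$ or $N(v)\subseteq\sigma$. This pins down the structure of $\sigma$ — in particular which vertices $v$ satisfy $N(v)\subseteq\sigma$, and how $\sigma$ meets the neighborhood of each endpoint of an edge — and hence controls, for an edge $\{u',w'\}$ with $\sigma\not\subseteq N(u')\cup N(w')$, where a ``witness'' vertex of $\sigma$ (one adjacent to neither $u'$ nor $w'$) can lie. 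Walking through the greedy definition of $\mes_\prec(\sigma)$, one argues that each time a new vertex is forced into $M_\prec(\sigma)$ it can be charged to one of a short list of structural possibilities determined by $\{u_0,w_0\}$ and \Cref{lem:4neighbor}, and that this list has size at most $5$; then \Cref{prop:minimalexclusion} yields $5$-collapsibility.

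\textit{Main obstacle.} The lower bound is routine given \Cref{lem:retraction} and the $n=4$ homology computation. The substance lies entirely in the upper bound: committing to an explicit order on $E(\I_n)$ and then carrying out the hypercube case analysis that rules out a ``staircase'' $f_1\prec\cdots\prec f_6$ of non-good edges preceding $\{u_0,w_0\}$ together with distinct vertices $b_1,\dots,b_6\in\sigma$ satisfying $\{b_1,\dots,b_{l-1}\}\subseteq N(u_{f_l})\cup N(w_{f_l})$ with $b_l$ the $\prec$-least vertex of $\sigma$ outside $N(u_{f_l})\cup N(w_{f_l})$ for every $l$. Pinning the bound down to exactly $5$ — consistent with the nonvanishing of $\tilde H_3$ and $\tilde H_4$ and the vanishing above — is the delicate point, and it is what dictates the precise ordering that must be used.
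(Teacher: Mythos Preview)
Your lower bound is exactly the paper's argument, and your upper-bound framework (apply \Cref{prop:minimalexclusion} with \Cref{lem:4neighbor} as the structural input) is also the paper's. The gap is in the execution of the upper bound, and it stems from a misconception: you believe that ``pinning the bound down to exactly $5$\dots is what dictates the precise ordering that must be used,'' and accordingly you commit to an explicit lexicographic order on $E(\I_n)$ and propose a hypercube case analysis tied to that order. In fact the paper's argument works for an \emph{arbitrary} linear order $\prec$ on the maximal simplices; no property of the order is ever used. The choice of order is a red herring, and the staircase case analysis you outline is not the right organizing principle.

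The missing structural idea is this. Fix any $\prec$, let $\sigma\in\X_n$, and let $\gamma_p=N(v)\cup N(w)$ be the $\prec$-first maximal simplex containing $\sigma$. One shows directly that $|M_\prec(\sigma)\cap N(v)|\le 3$ (and symmetrically for $w$): once three elements of $N(v)$ have appeared in $\mes_\prec(\sigma)$, any later maximal simplex $\gamma_k$ either already contains one of those three (so the next term repeats an old vertex) or contains all three, in which case \Cref{lem:4neighbor} forces $N(v)\subseteq\gamma_k$ and the next term must come from outside $N(v)$. This gives $|M_\prec(\sigma)|\le 6$. One then rules out equality: if $|M_\prec(\sigma)|=6$ then $M_\prec(\sigma)=N(v)\sqcup N(w)$ with $|N(v)|=|N(w)|=3$ (forcing $\sigma$ to live in a $3$-cube), and looking at the step where the last vertex of $N(w)$ first appears, the corresponding maximal simplex $\gamma_{i_0}$ must contain $N(v)$ together with two neighbours of $w$, which forces $\gamma_{i_0}=\gamma_p$, contradicting $i_0<p$. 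This is the entire argument; no specific $\prec$ is needed, and the ``charging to a short list of structural possibilities'' you allude to is replaced by the clean two-step bound $3+3\to 6\to 5$.
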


\begin{proof}
The proof is similar to the proof of~\cite[Theorem $1.5$]{shukla2022vietoris}.
We first show that the collapsibility number of $\X_n$ is at most $5$.
To begin, we fix some notation.
Choose a linear order $\prec$ on the set of all maximal simplices of $\X_n$, namely $\gamma_1 \prec  \gamma_2  \prec \ldots \prec \gamma_{q}$.
Let $\sigma \in \X_n$ be arbitrary, and let $p$ be the smallest index such that  $\sigma \subseteq \gamma_p$.
Let  $\mes_{\prec}(\sigma) = (\alpha_1, \ldots, \alpha_{p-1})$.
There exist $v, w \in V(\I_n)$ such that $v \sim w$ and   $\gamma_p = N(v) \cup N(w)$.
We want to show that $|M_{\prec}(\sigma)| \leq 5$

We first show that $|M_{\prec} (\sigma) \cap N(v)| \leq 3$.
On the contrary assume that  $|M_{\prec}(\tau) \cap N(v)| \geq 4$.
Let $l$ be the least integer  such that $|\{\alpha_1, \ldots, \alpha_l\} \cap N(v)| = 3$.
Let  $\{\alpha_1, \ldots, \alpha_l\} \cap N(v) = \{\alpha_{i_1}, \alpha_{i_2}, \alpha_{i_3} \} $.
Clearly, $l < p-1$ and  $\alpha_l \in \{\alpha_{i_1}, \alpha_{i_2}, \alpha_{i_3}\}$.

If $ \{\alpha_1, \ldots, \alpha_{l}\} \cap (\sigma \setminus \gamma_{l+1}) \neq  \emptyset$, then $\alpha_{l+1} \in \{\alpha_1, \ldots, \alpha_l\}$, and therefore $\{\alpha_1, \ldots, \alpha_{l+1}\} \cap N(v) = \{\alpha_{i_1}, \alpha_{i_2}, \alpha_{i_3}\}$.
On the other hand if $ \{\alpha_1, \ldots, \alpha_{l}\} \cap (\sigma \setminus \gamma_{l+1}) = \emptyset$, then $\{\alpha_{i_1}, \alpha_{i_2}, \alpha_{i_3}\} \subseteq \gamma_{l+1}$.
Then   $N(v)\subseteq \gamma_{l+1}$ by  \Cref{lem:4neighbor}.
Thus by definition of $\alpha_{l+1}$, we see that  $\alpha_{l+1} \notin N(v)$. Hence 	$\{\alpha_1, \ldots, \alpha_{l+1}\} \cap N(v) = \{\alpha_{i_1}, \alpha_{i_2}, \alpha_{i_3}\}$.
If $l+1 = p-1$, then we  get that $|M_{\prec} (\sigma) \cap N(v) | \leq 3$, which is  a contradiction as $|M_{\prec}(\sigma) \cap N(v)| \geq 4$.
Inductively assume that for  all $l \leq k < p-1$, we have $\{\alpha_1, \ldots, \alpha_{k}\} \cap N(v) = \{\alpha_{i_1}, \alpha_{i_2}, \alpha_{i_3}\}$.
If $\{\alpha_1, \ldots, \alpha_{p-2}\} \cap (\sigma \setminus \gamma_{p-1}) \neq  \emptyset $, then $\alpha_{p-1} \in \{\alpha_1, \ldots, \alpha_{p-2}\}$.
Hence $\{\alpha_1, \ldots, \alpha_{p-1}\} \cap N(v) = \{\alpha_{i_1}, \alpha_{i_2}, \alpha_{i_3}\}$.
If $\{\alpha_1, \ldots, \alpha_{p-2}\} \cap (\sigma \setminus \gamma_{p-1}) = \emptyset$, then $\{\alpha_{i_1}, \alpha_{i_2}, \alpha_{i_3}\} \subseteq \gamma_{p-1}$.
We get that  $N(v)\subseteq \gamma_{p-1}$ by \Cref{lem:4neighbor}.
Thus $\alpha_{p-1} \notin N(v)$.
It follows that $\{\alpha_1, \ldots, \alpha_{p-1}\} \cap N(v) = \{\alpha_{i_1}, \alpha_{i_2}, \alpha_{i_3}\}$, a contradiction as $|M_{\prec}(\sigma) \cap N(v)| \geq 4$.
Therefore  $|M_{\prec} (\sigma) \cap N(v) | \leq 3$.

By an argument similar to the above, we also have that $|M_{\prec} (\sigma) \cap N(w) | \leq 3$.
Since $M_{\prec}(\sigma)\subseteq \sigma \subseteq \gamma_p = N(v)\cup N(w)$, we see that  $|M_{\prec}(\sigma)| \leq 6$.
Suppose $|M_{\prec}(\sigma)| = 6$.
Let $M_{\prec} (\sigma)= \{z_1,z_2, z_3,  z_{4}, z_5, z_6\}$, where $M_{\prec}(\sigma) \cap N(v) = \{z_1, z_2, z_3\}$
and $M_{\prec}(\sigma)  \cap N(w) = \{z_4, z_5, z_6\}$.
Since $\X_n = \N(\I_n, 3)$ is defined using scale parameter $3$, we also have $N(v) = \{z_1, z_2, z_3\}$
and $N(w) = \{z_4, z_5, z_6\}$.
Without loss of generality we can assume that $z_6$ is appearing after $z_1, \ldots, z_5 $ in $\mes_{\prec}(\sigma)$.
Let $i_0$ be the first index such that $\alpha_{i_0} = z_6$; from the definition of $\mes(\sigma)$ we have $i_0 < p$.
Then $(\sigma \setminus \gamma_{i_0}) \cap \{z_1, \ldots, z_5\} = \emptyset$, which implies that $N(v) \cup \{z_4, z_5\} \subseteq \gamma_{i_0}$.
There exists some $j_0$ such that $\gamma_{i_0} = N(v) \cup N(v^{j_0})$.
Since $w \sim v$, we have $w = v^{k_0}$ for some $k_0 \in [n]$.
Since $z_4, z_5 \in \sigma \subseteq \gamma_p$, we see that  $z_4, z_5 \sim v^{k_0}$.
There exist $l$ and $s$ such that $z_4 = v^{k_0, l}$ and $z_5 = v^{k_0, s}$.
Since $z_4$ and $z_5$ are not adjacent to $v$, we conclude that $z_4, z_5 \in N(v^{j_0})$.
But this is possible only when $j_0 = k_0$, which implies that $\sigma_p = \sigma_{i_0}$, contradicting the fact that $i_0 < p$.
Hence $|M_{\prec}(\sigma)| \leq 5$.

Since $\sigma$ is an arbitrary simplex of $\X_n$, we have that $d_{\prec}(\X_n)=\max_{\sigma \in X}|M_{\prec}(\sigma)| \leq 5$.
It therefore follows from \Cref{prop:minimalexclusion} that $\X_n$ is $5$-collapsible.
	
Let $K$ be the \v{C}ech complex of a $4$-dimensional cube subgraph  of $\I_n$ at scale $r=3$, {\it i.e.}, $K\cong \X_4$.
Then using \Cref{lem:retraction}, there exists a retraction  $r:\X_n \to K$ for $n\ge 4$.
Since $K \cong \X_4$ and since we know $\tilde{H}_4(\X_4;\Z) \neq 0$ from a homology computation (\Cref{table}), we see that $\tilde{H}_4(K;\Z) \neq 0$.
Further, since the homomorphism $r_{\ast}: \tilde{H}_4(\X_n;\Z) \to \tilde{H}_4(K;\Z)$ induced by the retraction $r$ is surjective, $\tilde{H}_4(\X_n;\Z) \neq 0$.
Using  \Cref{prop:collapsibilitysubcomplex}, we conclude that the collapsibility number of $\X_n$ is $5$.
\end{proof}

\subsection{Homology}
\label{subsec:homology}

In this subsection we will prove \Cref{thm:main3}, which states that for $n\geq 4$, $\tilde{H}_i(\N(\I_n, 3);\Z) \neq 0$ if and only if $i \in \{3, 4\}$.
The proof will proceed as follows.
\Cref{thm:collapsibility} and \Cref{prop:collapsibilitysubcomplex} imply that 
$\tilde{H}_i(\N(\I_n, 3);\Z) = 0$ for $i \geq 5$.
\Cref{table} displays a homology computation on a computer that $\tilde{H}_i(\N(\I_4, 3);\Z) \neq 0$ for $i \in \{3,4\}$, and so using the retractions in \Cref{lem:retraction}, we get that $\tilde{H}_i(\N(\I_n, 3);\Z) \neq 0$ for $i \in \{3, 4\}$ and $n\ge 4$.
It therefore suffices to show that $\tilde{H}_i(\N(\I_n, 3);\Z) = 0$ for $0 \leq i \leq 2$, and we now build up the lemmas and the machinery in order to do this.

Recall that for each $i \in [n]$ and $\epsilon \in \{0, 1\}$, $\I_n^{i, \epsilon}$ is the induced subgraph of $\I_n$ on the vertex set $\{v \in V(\I_n) : v(i) = \epsilon \}$.
For $1 \leq i \leq n$ and $ \epsilon  \in  \{0, 1\}$, let $\X_n^{i,\epsilon} = \N(\I_n^{i, \epsilon}, 3)$ and let 
\[\partial(\X_n) = \bigcup\limits_{i \in [n],  \epsilon \in \{0, 1\}} \X_n^{i,\epsilon}.\]
We say that a simplex $\sigma \in \X_n$ \emph{covers all places} if for each $i \in [n]$, there exist vertices $v, w \in \sigma$ such that $v(i)= 1$ and $w(i) = 0$.

The following lemma plays a key role in the proof of \Cref{thm:main3}, when showing that $\X_n$ is 2-connected.
The lemma is applied there with $l=2$, but we think it is also of independent interest for all $l\le n-2$.

\begin{lemma}
\label{lem:homology} 
Let $n \geq 4$ and let $l \leq n-2$.
Then   any $l$-cycle $z$ in $\X_n$ is homologous to an $l$-cycle $\tilde{z}$ in $\partial(\X_n)$.
\end{lemma}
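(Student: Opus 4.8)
The goal is to push an arbitrary $l$-cycle $z$ in $\X_n$ into the subcomplex $\partial(\X_n)$ up to homology, for $l\le n-2$. The natural strategy is to measure how far $z$ is from lying in $\partial(\X_n)$ and to reduce this defect one step at a time. Concretely, I would say that a simplex $\sigma$ of $\X_n$ lies in $\X_n^{i,\epsilon}$ precisely when every vertex of $\sigma$ has $i$-th coordinate equal to $\epsilon$; a simplex fails to lie in any $\X_n^{i,\epsilon}$ exactly when it ``covers all places'' in the terminology just introduced, and the maximal simplices $N(v)\cup N(w)$ (for $v\sim w$) all cover all places once $n$ is not too small. So the cycle $z$ is supported on simplices that may cover many coordinates, and I want to rewrite it so that every simplex in its support sits inside some $\X_n^{i,\epsilon}$.

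**Main steps.** First, I would set up an induction or a well-founded descent on the ``badness'' of $z$ — for instance, the number of coordinates $i\in[n]$ such that some simplex in the support of $z$ has vertices taking both values in coordinate $i$, or a lexicographic count of how many top-dimensional simplices of $z$ fail to be confined. Second, fix one ``bad'' coordinate, say $i=1$, and split the vertex set $V(\I_n)=A_0\sqcup A_1$ according to the first coordinate. Write $z = z_0 + z_{\mathrm{mix}} + z_1$ where $z_\epsilon$ is supported on simplices entirely inside $A_\epsilon$ and $z_{\mathrm{mix}}$ is supported on simplices genuinely straddling the two halves. The point is to find an $(l+1)$-chain $w$ in $\X_n$ with $\partial w = z_{\mathrm{mix}} + (\text{something confined to one side in coordinate }1)$; here is where the hypothesis $l\le n-2$ must be used, since it guarantees enough room (enough spare coordinates $\{2,\dots,n\}$) to build the necessary cones or prism operators. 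A clean way to do this: for a straddling simplex $\sigma\subseteq N(u_1)\cup N(u_2)$ with $u_1\sim u_2$, note $\sigma$ together with suitable ``$i=1$-flattened'' vertices $\phi(\sigma)$ (using the retraction $\tilde\phi$ of \Cref{lem:retraction} with $i=1$) spans a simplex, because $\phi(v)$ is adjacent to or equal to $v$; this gives a prism joining $\sigma$ to $\phi(\sigma)$, and the prism has dimension $l+1$ as long as $\dim\sigma + 1\le \dim(\text{maximal simplex})$, which the scale-$3$ dimension count and $l\le n-2$ deliver. Assembling these prisms over all straddling simplices of $z$ yields a chain homotopy between $z$ and a cycle $z'$ that avoids straddling in coordinate $1$, hence decreasing the badness. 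Third, iterate: after finitely many steps we reach a cycle $\tilde z$ in which every simplex is confined in at least one coordinate, i.e. $\tilde z$ lies in $\bigcup_{i,\epsilon}\X_n^{i,\epsilon} = \partial(\X_n)$, and $\tilde z$ is homologous to $z$ by construction.

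**The hard part.** The main obstacle is the bookkeeping in the prism/cone construction: one must verify that for every straddling simplex $\sigma$ in the support of $z$ the ``prism'' cells $\sigma * \phi(\sigma)$ (or the appropriate subdivision thereof) are genuinely simplices of $\X_n$ — this needs a careful check that $N(u_1)\cup N(u_2)\cup N(\phi(u_1))\cup N(\phi(u_2))$ or the relevant subset is still contained in a maximal simplex of $\N(\I_n,3)$, which is exactly where $l\le n-2$ enters (if $\sigma$ is too big there is no room to adjoin the flattened copy). Equally delicate is checking that the boundary of the sum of prisms telescopes correctly so that only $z_{\mathrm{mix}}$ and a confined remainder survive, with all interior faces cancelling; this is the standard prism-operator identity $\partial P + P\partial = \phi_\# - \mathrm{id}$ but one has to confirm it is well-defined at the chain level on the subcomplex spanned by the support of $z$. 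Once those two points are nailed down, the descent on badness is routine, and the lemma follows. I would also double-check the low-dimensional edge cases ($l=0,1$) separately, since connectivity-type arguments there are cheap and avoid fussing with the general prism machinery.
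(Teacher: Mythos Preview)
Your prism/chain-homotopy strategy is natural, but the step you flag as ``the hard part'' is not merely delicate bookkeeping --- it actually fails. The assertion that $\sigma\cup\phi(\sigma)$ (or even each individual prism cell) is a simplex of $\X_n$ is false, and the justification ``because $\phi(v)$ is adjacent to or equal to $v$'' does not suffice: pairwise adjacency of matched vertices says nothing about diameter within $\N(\I_n,3)$. Concretely, take $n=5$, $l=3$, and the simplex $\sigma=\{00001,11000,10100,10011\}\subseteq N(10000)\cup N(10001)$, which covers all five coordinates. Flattening coordinate~1 gives $\phi(\sigma)=\{00001,01000,00100,00011\}$. With the ordering $(00001,11000,10100,10011)$, the prism cell for $j=1$ is $\{00001,11000,01000,00100,00011\}$, and $d(11000,00011)=4>3$, so this set is not a face of $\N(\I_5,3)$. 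One checks that $(10000,10001)$ is the \emph{only} adjacent pair $(a,b)$ with $\sigma\subseteq N(a)\cup N(b)$, so there is no better choice of enclosing maximal simplex to rescue the construction; no vertex ordering avoids producing a distance-$4$ pair. Thus the standard prism operator $P$ does not land in $\X_n$, and the identity $\partial P+P\partial=\phi_\#-\mathrm{id}$ is unavailable.

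The paper proceeds quite differently and more locally. Rather than a global homotopy, it works one offending simplex at a time: for $\tau$ covering all places with $\tau\subseteq N(v)\cup N(w)$, a short case analysis (this is where $l\le n-2$ is genuinely used) shows $v\notin\tau$ and $w\notin\tau$, so $\tau\cup\{v\}$ is an $(l+1)$-simplex of $\X_n$. Subtracting the boundary of $\tau\cup\{v\}$ kills $\tau$ and introduces only faces that contain $v$; those faces cannot cover all places (again by the case analysis), hence lie in $\partial(\X_n)$. So the count $|I(z)|$ of bad simplices strictly decreases. Your descent idea is right in spirit; what is missing is this single-vertex coning and the combinatorial verification that $v\notin\tau$, which replaces the unworkable prism.
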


\begin{proof}
The proof is similar to the proof of~\cite[Lemma $4.15$]{shukla2022vietoris}.
Recall that for $v = v_1 \ldots v_n \in V(\I_n)$, we have the notation $v(i) = v_i$ for $1 \leq i \leq n$.

For any $k \in [n]$ and $\sigma \in \X_n$, we say that $\sigma$ \emph{covers $k$ places} if there exist distinct indices $i_1, \ldots, i_k \in [n]$ such that for each $1 \leq t \leq k$, we have vertices $a,b \in \sigma $ with $a(i_t) = 0$ and $b(i_t) = 1$.
For a chain $c = \sum c_i \tau_i$ in $\X_n$, if $c_i \neq 0$, then  we say that $\tau_i \in c$.
For a cycle $c$ in $ \X_n$, let $I(c) = \{\sigma \in c: \sigma \notin \partial(\X_n)\}$.

Let $z$ be an $l$-cycle in $\X_n$.
Clearly, if $I(z) = \emptyset$, then $z$ is an $l$-cycle in $\partial(\X_n)$.
So assume that  $I(z) \neq \emptyset$. We show that $z$ is homologous to a cycle $\tilde{z}$ such that $I(\tilde{z}) = \emptyset$. It is sufficient to show that 
$z$ is homologous to an $l$-cycle $z_1$ such that $|I(z_1)| < |I(z)|$, since then we can proceed by induction.
Let $\tau \in z$ be such that $\tau \in I(c)$.
Then $\tau$ covers all places.
Let $\gamma$ be a maximal simplex such that $\tau \subseteq \gamma$.
There exists $v \sim w$ such that 
$\gamma = N(v) \cup N(w)$.
		
If $N(w) \cap \tau  = \emptyset$, then $\tau \subseteq N(v)$.
Since $\tau$ is $l$-dimensional, we see that $\tau$ can cover at most $l+1$ places, which is a contradiction to our  assumption that $\tau$ covers all places as  $n > l+1$.
Hence $N(w)  \cap \tau \neq \emptyset$.
Similarly, $N(v) \cap \tau \neq \emptyset$.
Hence $N(v) \cap \tau \neq \emptyset$ and $N(w) \cap \tau \neq \emptyset$.

Since $w \sim v$, there exists $p \in [n]$ such that $w = v^{p}$.
First assume that  $v, w \in \tau$.
If $N(w) \cap \tau = \{v\}$, then $\tau = \{v, w,  v^{i_1}, \ldots, v^{i_{l-1}} \}$ for distinct $i_1, i_2, \ldots, i_{l-1} \in [n] \setminus \{p\}$.
In this case  $\tau$ covers only $l < n$ places, namely $i_1, \ldots, i_{l-1}, p$, a contradiction to the assumption that $\tau$ covers all places.
Hence $|N(w) \cap \tau | \geq 2$.
Then 
$\tau = \{v, w, v^{i_1}, \ldots, v^{i_s}, v^{p, j_1}, \ldots, v^{p, j_t}\}$, for some  $i_1, \ldots, i_s, j_1 \ldots, j_t \in [n]$, where $s+t = l-1$.
Here $\tau$ can cover at most $l$ places, namely $i_1, \ldots, i_s, j_1, \ldots, j_t, p$ (and furthermore $\tau$ covers $l$ places only if  
$\{i_1, \ldots, i_s\} \cap \{j_1, \ldots, j_t\} = \emptyset$).
Since $l < n$, $\tau$ does not cover all places.
Hence $v$ and $w$ cannot both be in $\tau$, {\it i.e.}\ $\{v, w\} \not\subseteq \tau$.

Suppose $v\in \tau$.
Then $w \notin \tau$.
If $N(w) \cap \tau = \{v\}$, then $\tau = \{v, v^{i_1}, \ldots, v^{i_l} \}$ for some $i_1, i_2, \ldots, i_l \in [n]$.
Observe that $\tau$ covers only $l$ places, namely $i_1, \ldots, i_l$.
Hence $|N(w) \cap \tau | \geq 2$.
Let $\tau = \{v, v^{i_1}, \ldots, v^{i_s}, v^{p, j_1}, \ldots, v^{p, j_t}\}$, where $w = v^{p}$, where $i_1, \ldots, i_s, j_1 \ldots, j_t \in [n]$, and where $s+t = l$.
Here $\tau$ can cover at most $l+1<n$ places, namely  $i_1, \ldots, i_s, j_1, \ldots, j_t, p$ (and furthermore $\tau$ covers $l+1$ places only if  
$\{i_1, \ldots, i_s\} \cap \{j_1, \ldots, j_t\} = \emptyset, p \notin \{i_1, \ldots, i_s\}$).
Thus, we conclude that  $v \notin \tau$.
By a similar argument, $w \notin \tau$.

For a simplex $\delta$, let $Bd(\delta)$ denote the simplicial boundary of $\delta$.
Let $(-1)^i z_{\tau}$ be the   coefficient of $\tau$ in $z$, where $z_\tau\in\{1,2,3,\ldots\}$.
Let $(-1)^j$ be the coefficient of $\tau$ in $Bd(\tau \cup \{v\})$.
Define an $l$-cycle  $z_1$ as follows:
\[
z_1  = \begin{cases}
\ z - z_{\tau} Bd(\tau \cup \{v\})& \text{if} \  i \ \text{and} \  j \ \text{are of same parity, and}\\
\ z+ z_{\tau}  Bd(\tau \cup \{v\}) & \text{if} \  i \ \text{and} \ j \ \text{are of opposite parity}.
\end{cases}
\]

Clearly, $z$ is homologous to $z_1$.
Let $\sigma \subseteq \tau \cup \{v\}$ be such that $|\sigma| = l+1$ and $\sigma \neq \tau$.
Then $v \in \sigma$, and so from the argument in the paragraphs above we conclude that  $\sigma \in \partial(\X_n)$.
Hence $|I(z_1)| < |I(z)|$.
%Since $|I(z)|$ is finite, by repeating the above argument a finite number of times, we get a cycle $z_k$ such that $z$ is homologous to $z_k$ and $|I(z_k)| = 0$.
%We take $\tilde{z} = z_k$.
	
%This completes the proof.
\end{proof}

We first show that $\X_n$ is simply connected, and then we show it is 2-connected.

\begin{lemma}\label{lem:simplyconnected}
For $n \geq 2$, $\X_n$ is simply connected.
\end{lemma}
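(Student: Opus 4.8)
The plan is to use van Kampen's theorem in an inductive form, peeling off vertices of $\I_n$ one at a time, analogously to the $r=2$ case but now only needing $\pi_1$ to vanish rather than a full homotopy type computation. More precisely, I would fix a vertex $u$ of $\I_n$ and write $\X_n = \N(\I_n,3)$ as the union of $\X_n \setminus u$ (the deletion) and $\st_{\X_n}(u)$, whose intersection is $\lk_{\X_n}(u)$. Since the star is contractible, van Kampen gives $\pi_1(\X_n) \cong \pi_1(\X_n \setminus u) \ast_{\pi_1(\lk_{\X_n}(u))} 1$, i.e.\ the quotient of $\pi_1(\X_n\setminus u)$ by the normal closure of the image of $\pi_1(\lk_{\X_n}(u))$. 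So it suffices to (a) understand the deletion $\X_n\setminus u$ and (b) show $\lk_{\X_n}(u)$ is connected, so that at minimum $\pi_1(\X_n)$ is a quotient of $\pi_1(\X_n\setminus u)$ — but this alone is not enough, so one really wants a cleaner structural claim.

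A cleaner route: I would try to show directly that $\X_n$ is simply connected by exhibiting, for $n\ge 2$, that every loop can be contracted because the $2$-skeleton of $\X_n$ is ``rich enough.'' Concretely, I expect the following to hold: for any two adjacent vertices $v\sim w$ of $\I_n$, the maximal simplex $N(v)\cup N(w)$ has dimension $2n-2 \ge 2$, so it contains plenty of $2$-simplices; and any two vertices of $\I_n$ at Hamming distance $\le 3$ already lie together in some maximal simplex of $\X_n$ (since $N(v)\cup N(w)$ for $v\sim w$ covers all vertices within distance $2$ of $v$ and within distance $2$ of $w$). This means the $1$-skeleton of $\X_n$ is the graph on $V(\I_n)$ with edges between vertices at Hamming distance $\le 3$, which is highly connected. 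The key step is then to show that every cycle in this $1$-skeleton bounds in the $2$-skeleton. I would reduce an arbitrary edge-loop to a product of ``triangular'' loops (length-$3$ loops), each of which is killed by a $2$-simplex, using that $\I_n$ is connected and that consecutive edges of the loop can be ``short-cut'' through common maximal simplices. The hard part is bookkeeping: showing that the elementary moves (replacing a path $a$–$b$–$c$ by the edge $a$–$c$ when $\{a,b,c\}$ spans a $2$-simplex, and inserting/deleting such triangles) suffice to trivialize any loop, which amounts to checking that the flag-like structure of $\X_n$ at the level of its $2$-skeleton is simply connected.

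Given the paper's established machinery, though, I suspect the intended proof is the inductive one via \Cref{lem:splitting} or via van Kampen applied to $\X_n = (\X_n\setminus u)\cup \st_{\X_n}(u)$, combined with an explicit description of $\lk_{\X_n}(u)$ and $\X_n\setminus u$. So I would: first, choose $u = 0\ldots0$; second, identify $\X_n \setminus u$ — I expect it retracts onto or deformation-retracts to something built from the subcubes $\I_n^{i,0}$ and is simply connected by a sub-induction (using that for $n-1\ge 2$ the analogous complexes are simply connected, plus gluing lemmas); third, show $\lk_{\X_n}(u)$ is connected (it is covered by the simplices $N_1[v]\setminus u$ for $v\sim u$ together with $N(u)$ itself, and these overlap pairwise, so the nerve-type argument of \Cref{thm:nerve}(ii) with $k=0$ applies — each pairwise intersection is nonempty hence $(-1)$-connected, giving $0$-connectivity); fourth, conclude via van Kampen that $\pi_1(\X_n)$ is a quotient of $\pi_1(\X_n\setminus u) = 1$, hence trivial. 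The base cases $n=2,3$ are handled directly since $\X_2$ is contractible and $\X_3\simeq\vee_3\bs^4$ is simply connected.

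The main obstacle I anticipate is controlling the deletion $\X_n\setminus u$: unlike the $r=2$ situation where $\X_n\setminus u$ split cleanly as $\N(G_{m-1},2)$ wedge something, here the deletion is not obviously one of the complexes in our inductive family, so I would need either a separate argument that $\X_n\setminus u$ is simply connected (perhaps again by van Kampen, peeling off another vertex, or by a nerve decomposition along the subcubes $\I_n^{i,\epsilon}$ and invoking $2$-connectivity of lower pieces), or I would need to set up the induction to carry a statement about deletions as well. A secondary subtlety is verifying that the inclusion $\lk_{\X_n}(u)\hookrightarrow \X_n\setminus u$ induces the zero map on $\pi_1$ (needed if one uses \Cref{lem:splitting}-style reasoning rather than bare van Kampen); if $\X_n\setminus u$ is already simply connected this is automatic, which is why establishing simple-connectivity of the deletion is really the crux.
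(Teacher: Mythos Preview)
Your van Kampen approach via vertex deletion has the gap you yourself identify: the deletion $\X_n \setminus u$ is not a member of the inductive family, and proving it simply connected would require an argument of comparable difficulty to the original lemma. You propose patching this either by a separate simple-connectivity proof for the deletion or by strengthening the inductive hypothesis, but you do not carry either out, so the proposal as written is incomplete.

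The paper avoids this obstacle with a more elementary argument, closer to the ``$2$-skeleton is rich enough'' idea in your second paragraph but with a specific reduction that keeps the induction clean. Instead of deleting a vertex, the paper pushes an arbitrary edge-loop into a codimension-one face $\N(\I_n^{n,\epsilon},3)\cong \X_{n-1}$, which \emph{is} in the inductive family. Concretely: first refine the loop so that consecutive vertices have Hamming distance exactly $1$ (when an edge has length $2$ or $3$, insert intermediate vertices; these lie in the same maximal simplex, so the refinement is a homotopy). Then find the first vertex $v_\ell$ whose $n$-th coordinate differs from $v_1(n)$; since $d(v_{\ell-1},v_\ell)=1$, this forces $v_{\ell-1}=v_\ell^n$. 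The four vertices $v_{\ell-1},v_\ell,v_{\ell+1},w$ (with $w$ the fourth corner of the square in $\I_n$) span a simplex of $\X_n$, so one may replace $v_\ell$ by $w$, and $w(n)=v_1(n)$. Iterating this local square-swap drives the entire loop into $\X_n^{n,v_1(n)}\cong \X_{n-1}$, and induction (base case $\X_2$ contractible) finishes.

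The key structural difference: restricting to a face keeps you inside the family $\{\X_m\}$, whereas deleting a vertex does not. Your approach could likely be made to work with enough effort on the deletion, but the paper's face-pushing argument needs no auxiliary statements and the single local move is an explicit $2$-simplex homotopy.
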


\begin{proof}
For any two vertices $v, w \in V(\I_n)$, we recall the distance $d(v, w)$ between $v$ and $w$ is the number of coordinates where $v$ and $w$ differ,
{\it i.e.,} $d(v, w) = |\{i: v(i) \neq w(i)\}|$. 

Let $\gamma$ be a closed path in $\X_n$.
Since $\X_n$ is a simplicial complex, $\gamma$ is homotopic to a closed path $\delta= v_1 v_2\ldots v_m v_1$, where $\{v_i, v_{i+1}\}$ and $\{v_m, v_1\}$ are edges in $\X_n$ for each $1 \leq i \leq m-1$.
Without loss of generality, we can assume that successive
vertices in $\delta$ are distinct.
Otherwise, if $v_i = v_{i+1}$ for some $i$, we could delete $v_i$ and
obtain a homotopic path.
%We will show that $\delta$ is homotopic to a constant path.
If for some $i$, $d(v_i, v_{i+1}) =3$, then there exist $u_i, w_i $ such  that $d(v_i, w_i) = d(w_i, u_i) = d(u_i, v_{i+1}) = 1$ and $\{v_i, w_i,u_i,  v_{i+1}\} \in \X_n$.
Clearly, the path $\delta_1 = v_1 \ldots v_i w_i u_i v_{i+1} \ldots v_m v_1$ is homotopic to $\delta$.
Similarly, if for some $i$, $d(v_i, v_{i+1}) =2$, then there exists $a_i $ such  that $d(v_i, a_i) = d(a_i, v_{i+1}) = 1$ and $\{v_i, a_i, v_{i+1}\} \in \X_n$.
Clearly, the path $\delta_2 = v_1 \ldots v_i a_i v_{i+1} \ldots v_m v_1$ is homotopic to $\delta$.
Hence, by inserting new vertices between each such pair of the vertices of distance $2$ and  $3$, we can assume that $d(v_m, v_1)= 1 =d(v_i, v_{i+1}) $ for all $1 \leq i \leq m-1$.
If for some $i$, $v_i = v_{i+2 (\text{mod} \ m)}$, then we could delete $v_{i+1}$ and obtain a homotopic path.
Hence we can also assume that $v_i \neq v_{i+2 (\text{mod} \ m)}$ for all $i$.
  
Our proof is by induction on $n$.
If $n = 2$, then  clearly $\X_2$ is contractible and therefore any path is homotopic to a constant path.
Now let $n \geq 3$ and assume that $\X_r$ is simply connected for all $2 \leq r < n$.
We will show that the closed path $\delta$ in $\X_n$ is homotopic to a closed path which lies in $\X_{n-1}$.
Let $l$ be the least integer such that $v_l(n) \neq v_1(n)$.
%\note{Henry: I need to remind myself where this notation $v_l(n)$ is from, and same for $v_l^n$ in the following sentence.
%Samir: here $v_l(n)$ means the nth coordinate of the vertex $v_l$ and $v_l^n$ means the vertex obtained from $v_l$ by  changing its nth coordinate, which  we have defined at the beginning of section 5, page 9.}
Clearly, $l\geq 2$.
Since $d(v_{l-1}, v_l) = 1$, we have $v_{l-1}= v_{l}^{n}$.
Let us first assume that 
$v_{l+1 (\text{mod} \ n)} \neq v_1$, {\it i.e.,} $l \neq n$.
Since $v_{l+1} \neq v_{l-1}$, $d(v_{l-1}, v_{l+1}) = 2$ and therefore there exists a vertex $w$ such that $\{v_{l-1}, v_{l}, v_{l+1}, w\}$ are vertices of a square in $\I_n$.
Here, $d(v_{l-1}, w) = 1$  and $d(v_l, w) = 2$.
Since $\{v_{l-1}, v_{l}, v_{l+1}, w\}$ are vertices of a square, $w = v_l^{n, i_o}$ for some $i_o \neq n$.
Observe that  $w(n) = v_{l-1}(n) = v_1(n)$.
Clearly, the path $\delta_1 = v_1 \ldots v_{l-1} w v_{l+1} \ldots v_n v_1$ is homotopic to $\delta$.
By repeating this process, after a finite number of steps we get a path $\delta_k =  u_1 \ldots u_q$ which is  homotopic to $\delta$ and 
which satisfies $u_i(n) = u_1(n)$ for all $1 \leq i \leq q$.
Hence $\delta_k$ is a path in $\N(\I_n^{n, v_1(n)}) \simeq \X_{n-1}$.
By the induction hypothesis, $\delta_k$ is homotopic to a constant path, and therefore $\delta$ is also homotopic to a constant path.
\end{proof}

\begin{lemma}\label{lem:upto3}
For any $n \geq 2$, $\X_n$ is $2$-connected.
\end{lemma}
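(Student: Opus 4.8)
The plan is to argue by induction on $n$, reducing to a homology computation via the Hurewicz theorem. The base cases $n=2,3$ are immediate, since $\X_2$ is contractible and $\X_3\simeq\vee_3\bs^4$ by \cite{conant2010boolean}, both of which are $2$-connected. So fix $n\ge4$ and assume $\X_m$ is $2$-connected for all $2\le m<n$ (recalling also that $\X_1$ is contractible). Since $\X_n$ is simply connected by \Cref{lem:simplyconnected}, the Hurewicz theorem gives $\pi_2(\X_n)\cong\tilde{H}_2(\X_n;\Z)$, so it suffices to show $\tilde{H}_2(\X_n;\Z)=0$.

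Let $z$ be a $2$-cycle in $\X_n$. Because $2\le n-2$, \Cref{lem:homology} produces a homologous $2$-cycle $\tilde{z}$ supported in $\partial(\X_n)=\bigcup_{i\in[n],\,\epsilon\in\{0,1\}}\X_n^{i,\epsilon}$, so it is enough to prove that $\partial(\X_n)$ is $2$-connected: then $\tilde{z}$, and hence $z$, bounds in $\partial(\X_n)\subseteq\X_n$. I would prove this using the nerve theorem \Cref{thm:nerve}(ii) for the cover $\{\X_n^{i,\epsilon}\}_{i\in[n],\,\epsilon\in\{0,1\}}$ with $k=2$. The first step is to identify the iterated intersections: for distinct coordinates $i_1,\dots,i_t$, I claim $\bigcap_{j=1}^{t}\X_n^{i_j,\epsilon_j}=\N(Q,3)$, where $Q\cong\I_{n-t}$ is the subcube cut out by the constraints $v(i_j)=\epsilon_j$ (and the intersection is empty when two constraints conflict on a common coordinate). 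The content of this claim is that a simplex of $\X_n$ all of whose vertices lie in $Q$ is already a simplex of $\N(Q,3)$; I would verify it from \Cref{lem:maximal-simplices}, intersecting a generating simplex $N(v)\cup N(w)$ of $\X_n$ with $V(Q)$ and checking, according to whether $v$ and $w$ lie in $Q$, that the result is contained in a generating simplex of $\N(Q,3)$.

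Granting this, the inductive hypothesis shows that a nonempty $t$-fold intersection is $2$-connected for $t=1$ (it is $\cong\X_{n-1}$, and $n-1\ge3$), $2$-connected for $t=2$ (it is $\cong\X_{n-2}$, and $n-2\ge2$), connected for $t=3$ (it is $\cong\X_{n-3}$, and $n-3\ge1$), and nonempty for $t\ge4$; in every case it is $(k-t+1)$-connected with $k=2$. It remains to compute the nerve: a set of the subcomplexes $\X_n^{i,\epsilon}$ has nonempty intersection exactly when it contains at most one of $(i,0)$ and $(i,1)$ for each $i$, so $\mathbf{N}(\{\X_n^{i,\epsilon}\})$ is the join $\bs^0*\cdots*\bs^0$ of $n$ copies of $\bs^0$, which is homeomorphic to $\bs^{n-1}$. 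Since $n\ge4$, the sphere $\bs^{n-1}$ is $2$-connected, so \Cref{thm:nerve}(ii) gives that $\partial(\X_n)$ is $2$-connected. Hence $\tilde{H}_2(\partial(\X_n);\Z)=0$, so $\tilde{H}_2(\X_n;\Z)=0$, and together with simple connectivity and Hurewicz this shows $\X_n$ is $2$-connected, completing the induction.

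The step I expect to be the main obstacle is the identification of the iterated intersections as \v{C}ech complexes of subcubes --- that is, checking that restricting $\X_n$ to the vertex set of a subcube $Q$ recovers exactly $\N(Q,3)$, including the degenerate cases where an edge $vw$ generating a maximal simplex of $\X_n$ has one or both endpoints outside $Q$. Once that is in place, the connectivity bookkeeping, the nerve computation, and the application of \Cref{thm:nerve}(ii) are routine.
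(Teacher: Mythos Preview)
Your proposal is correct and follows essentially the same approach as the paper: induction on $n$ with base cases $n=2,3$, reduction to $\tilde{H}_2=0$ via \Cref{lem:simplyconnected} and Hurewicz, pushing $2$-cycles into $\partial(\X_n)$ via \Cref{lem:homology}, and then applying \Cref{thm:nerve}(ii) to the cover $\{\X_n^{i,\epsilon}\}$ together with the nerve computation $\mathbf{N}(\{\X_n^{i,\epsilon}\})\cong(\bs^0)^{*n}\simeq\bs^{n-1}$. Your treatment is in fact slightly more careful than the paper's: you explicitly flag and outline the verification that the $t$-fold intersections are $\N(Q,3)$ for subcubes $Q$ (which the paper asserts without proof), and you track the precise $(3-t)$-connectivity needed at each depth rather than uniformly claiming $2$-connectivity.
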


\begin{proof}
Since $\X_n$ is simply connected by \Cref{lem:simplyconnected}, by the Hurewicz theorem (\cite[Theorem~4.32]{Hatcher}) it is enough to show that  $\tilde{H}_2(\X_n)  = 0$.
Since $\X_3 \simeq \vee_3 \bs^4$ by~\cite[Example~3]{conant2010boolean}, clearly $\tilde{H}_2(\X_3)= 0$.
So fix $n \geq 4$, and inductively assume that $\tilde{H}_2(\X_m) = 0$ for all $2 \leq m < n$.
From \Cref{lem:homology}, any $2$-cycle in $\X_n$ is homologous to a $2$-cycle in $\partial(\X_n)$.
Hence it is sufficient to show that $\tilde{H}_2(\partial(\X_n)) = 0$.
For each $i \in [n]$ and $\epsilon \in \{0, 1\}$, recall 
$\X_n^{i, \epsilon} = \N(\I_n^{i, \epsilon}, 3)$.
Note  
$\partial(\X_n) = \bigcup\limits_{i \in [n], \epsilon \in \{0, 1\}} \X_n^{i, \epsilon}$.
Observe that each non-empty intersection $\X_n^{i_1, \epsilon_1} \cap \ldots \cap \X_n^{i_t, \epsilon_t}$ is homeomorphic to the \v{C}ech complex of some cube subgraph of dimension less than $n$ and therefore it is $2$-connected by the induction hypothesis.
Hence by \Cref{thm:nerve} $(ii)$,  $\partial(\X_n)$ is $2$-connected if and only if  $\mathbf{N}(\{\X_n^{i, \epsilon}\})$ is $2$-connected.
We now show that $\mathbf{N}(\{\X_n^{i, \epsilon}\})$ is $2$-connected.
For any $i, j\in [n]$ and $\epsilon, \delta \in \{0, 1\}$, let  $\overline{\{(i, \epsilon), (j, \delta)\}}$ be a simplicial complex on vertex set $\{(i, \epsilon), (j, \delta)\}$, which is isomorphic to $\bs^0$.
It is easy to check that 
\[\mathbf{N}(\{\X_n^{i, \epsilon}\}) \cong \overline{\{(1, 0), (1,1)\} } \ast \overline{\{(2, 0), (2,1)\} }  \ast \ldots \ast  \overline{ \{ (n, 0), (n,1)\}} \]
is the join of $n$ copies of $\bs^{0}$.
Hence $\mathbf{N}(\{\X_n^{i, \epsilon}\}) \simeq \bs^{n-1}$.
Since $n \geq 4$, we see that $\mathbf{N}(\{\X_n^{i, \epsilon}\})$ is $2$-connected.
\end{proof}

\begin{theorem}
\label{thm:main3}
Let $n \geq 4$.
Then $\tilde{H}_i(\N(\I_n, 3);\Z) \neq 0$ if and only if $i \in \{3, 4\}$.
\end{theorem}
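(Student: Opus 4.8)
The plan is to prove the theorem by splitting the range of $i$ into the three pieces $0\le i\le 2$, $i\in\{3,4\}$, and $i\ge 5$, and assembling results already established in this section. For the high range $i\ge 5$: by \Cref{thm:collapsibility} the complex $\X_n=\N(\I_n,3)$ is $5$-collapsible for $n\ge 4$, so by \Cref{prop:collapsibilitysubcomplex} it is homotopy equivalent to a simplicial complex of dimension at most $4$; hence $\tilde H_i(\N(\I_n,3);\Z)=0$ for all $i\ge 5$. For the low range $0\le i\le 2$: \Cref{lem:upto3} says $\X_n$ is $2$-connected for all $n\ge 2$, so $\tilde H_i(\N(\I_n,3);\Z)=0$ in this range as well.

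It then remains to see that $\tilde H_3$ and $\tilde H_4$ are nonzero, and here I would invoke \Cref{lem:retraction}. Choosing a $4$-dimensional cube subgraph $Q\subseteq\I_n$ (possible since $n\ge 4$), we obtain a retraction $\X_n\to\N(Q,3)$ with $\N(Q,3)\cong\X_4=\N(\I_4,3)$. A retraction induces a split surjection on reduced homology, so $\tilde H_i(\X_n;\Z)$ surjects onto $\tilde H_i(\X_4;\Z)$ for every $i$. The Polymake computation recorded in \Cref{table} gives $\tilde H_3(\X_4;\Z)=\Z$ and $\tilde H_4(\X_4;\Z)=\Z^{24}$, both nonzero; therefore $\tilde H_3(\N(\I_n,3);\Z)\ne 0$ and $\tilde H_4(\N(\I_n,3);\Z)\ne 0$ for all $n\ge 4$. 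Combining the three ranges yields the theorem.

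This assembly is only bookkeeping; the substance lies in the lemmas it cites, and the step I expect to be the main obstacle is the $2$-connectivity of $\X_n$, i.e.\ \Cref{lem:upto3} together with its prerequisite \Cref{lem:homology}. There one must first push an arbitrary $2$-cycle out of every maximal simplex that covers all places, via the boundary argument of \Cref{lem:homology}, so that it becomes homologous to a cycle supported on $\partial(\X_n)=\bigcup_{i,\epsilon}\X_n^{i,\epsilon}$; and then compute $\tilde H_2(\partial(\X_n))$ by recognizing $\mathbf N(\{\X_n^{i,\epsilon}\})$ as the $n$-fold join of copies of $\bs^0$ (hence $\simeq\bs^{n-1}$, which is $2$-connected for $n\ge 4$) and applying the generalized nerve theorem \Cref{thm:nerve}(ii), using the inductive hypothesis that all lower-dimensional intersections are $2$-connected. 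Similarly, the collapsibility upper bound in \Cref{thm:collapsibility} rests on the minimal-exclusion-sequence machinery of \Cref{prop:minimalexclusion} combined with \Cref{lem:4neighbor}. Once those inputs are in place, \Cref{thm:main3} follows immediately from the three-part case analysis above.
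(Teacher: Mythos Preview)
Your proposal is correct and follows essentially the same approach as the paper: the paper's proof likewise splits into the three ranges $0\le i\le 2$ (via \Cref{lem:upto3}), $i\in\{3,4\}$ (via the retraction of \Cref{lem:retraction} onto $\X_4$ and the computation in \Cref{table}), and $i\ge 5$ (via \Cref{thm:collapsibility} and \Cref{prop:collapsibilitysubcomplex}). Your additional commentary on the content of \Cref{lem:upto3}, \Cref{lem:homology}, and the collapsibility bound accurately summarizes how those inputs are established.
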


\begin{proof}
From \Cref{lem:upto3}, we have $\tilde{H}_i(\N(\I_n, 3);\Z) = 0$ for $0 \leq i \leq 2$.
Since $\tilde{H}_i(\N(\I_4, 3);\Z) \neq 0$ for $i \in \{3,4\}$ from a computer computation (\Cref{table}), using the retractions in \Cref{lem:retraction} we get that $\tilde{H}_i(\N(\I_n, 3);\Z) \neq 0$ for $i \in \{3, 4\}$.
\Cref{thm:collapsibility} and \Cref{prop:collapsibilitysubcomplex} imply that 
$\tilde{H}_i(\N(\I_n, 3);\Z) = 0$ for $i \geq 5$.
\end{proof}

\section{Persistence}
\label{sec:persistence}

In this section, we show that in the persistent (reduced) homology of \v{C}ech complexes of hypercube graphs, no bars in the persistence barcode have length longer than two filtration steps.
Indeed, we will show that for any integers $n$ and $r$, the inclusion $\N(\I_n, r) \hookrightarrow \N(\I_n, r+2)$ is null-homotopic.
The proof technique was shown to us by \v{Z}iga Virk, who recently proved that Vietoris--Rips complexes of hypercube graphs have no persistence intervals of length longer than one filtration step (see~\cite{adams2023lower}).

\begin{theorem}
\label{thm:persistence}
Let $n \geq 1$ and $r \geq 0$.
Then the inclusion $\iota \colon \N(\I_n, r) \hookrightarrow \N(\I_n, r+2)$ is homotopic to a constant map.
\end{theorem}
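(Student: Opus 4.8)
The plan is to deform the inclusion $\iota$ to a constant map through a finite chain of pairwise \emph{contiguous} simplicial maps. Recall the standard fact that if $f,g\colon K\to L$ are simplicial maps such that $f(\sigma)\cup g(\sigma)$ is a simplex of $L$ for every simplex $\sigma\in K$, then $|f|\simeq|g|$; so it suffices to produce simplicial maps $h_0,h_1,\dots,h_n\colon \N(\I_n,r)\to \N(\I_n,r+2)$ with $h_0=\iota$, with $h_n$ constant, and with $h_j$ contiguous to $h_{j+1}$ for each $j$. The intermediate maps will be the coordinate-collapsing maps of the hypercube, and the two extra filtration steps are precisely what makes each elementary move legal.

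For $0\le j\le n$, let $g_j\colon V(\I_n)\to V(\I_n)$ set the first $j$ coordinates equal to $0$ and fix all later coordinates. Then $g_0=\mathrm{id}$, while $g_n$ is constant at the vertex $\mathbf 0$. Each $g_j$ carries an edge of $\I_n$ to an edge or to a single vertex (according to whether the differing coordinate is $\le j$ or $>j$), so $g_j$ is a simplicial self-map of the graph $\I_n$, and its geometric realization $|g_j|\colon |\I_n|\to|\I_n|$ is $1$-Lipschitz for the shortest path metric. I would then check that $g_j$ induces a simplicial map $h_j\colon \N(\I_n,r)\to \N(\I_n,r+2)$: if $\sigma\in\N(\I_n,r)$, then by definition the closed balls of radius $\tfrac r2$ centered at the vertices of $\sigma$ share a point $p\in|\I_n|$, and then $|g_j|(p)$ lies in every radius-$\tfrac r2$ ball centered at a vertex of $g_j(\sigma)$, so $g_j(\sigma)\in\N(\I_n,r)\subseteq\N(\I_n,r+2)$. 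In particular $h_0=\iota$ and $h_n$ is the constant map at $\mathbf 0$. Working directly with a common point $p$ in this way also sidesteps the even/odd case split of \Cref{lem:cech}.

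The crux is contiguity of $h_j$ and $h_{j+1}$. Given $\sigma\in\N(\I_n,r)$ with common point $p$ of the radius-$\tfrac r2$ balls, set $q=|g_{j+1}|(p)$, so that $d(g_{j+1}(v),q)\le d(v,p)\le \tfrac r2$ for all $v\in\sigma$. Since $g_j(v)$ and $g_{j+1}(v)$ agree outside coordinate $j+1$, they are equal or adjacent, whence $d(g_j(v),q)\le \tfrac r2+1=\tfrac{r+2}2$. Therefore $q$ lies in every closed ball of radius $\tfrac{r+2}2$ centered at a vertex of $g_j(\sigma)\cup g_{j+1}(\sigma)$, so $g_j(\sigma)\cup g_{j+1}(\sigma)$ is a simplex of $\N(\I_n,r+2)$; hence $h_j$ and $h_{j+1}$ are contiguous and $|h_j|\simeq|h_{j+1}|$. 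Chaining these homotopies gives $\iota=|h_0|\simeq|h_1|\simeq\cdots\simeq|h_n|$, a constant map, which proves the theorem.

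I do not expect a genuine obstacle: once phrased via a common point of the balls, every step is elementary, the only background needed being that contiguous simplicial maps are homotopic and that simplicial self-maps of $\I_n$ are $1$-Lipschitz on realizations. Conceptually, the two filtration steps are consumed because the move from $h_j$ to $h_{j+1}$ displaces each vertex by at most one in the Hamming metric, which is exactly the displacement absorbed when the ball radius is enlarged from $\tfrac r2$ to $\tfrac r2+1$; this is the \v{C}ech analogue of Virk's one-step persistence bound for Vietoris--Rips complexes of hypercube graphs.
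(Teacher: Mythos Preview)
Your proof is correct and follows essentially the same route as the paper: the same coordinate-zeroing maps $g_j=p_j$ and the same chain of contiguous simplicial maps from the inclusion to a constant. The only difference is presentational: where the paper verifies contiguity via the combinatorial description of maximal simplices from \Cref{lem:cech} (hence an even/odd split on $r$), you argue directly with a common point $p$ of the radius-$\tfrac{r}{2}$ balls and the $1$-Lipschitz property of $|g_j|$, which is slightly cleaner and uniform in $r$.
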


\begin{proof}
For $1\le i\le n$, let $p_i\colon \{0,1\}^n \to \{0,1\}^n$ be the map defined by
\[p_i(x_1\ldots x_i x_{i+1} \ldots x_n)=(0\ldots 0 x_{i+1} \ldots x_n).\]
First, we claim that each $p_i$ induces a well-defined map $\tilde{p}_i \colon \N(\I_n, r) \to \N(\I_n, r+2)$ defined on vertices by $\tilde{p}_i(v)=p_i(v)$, and extended linearly to simplices via $\tilde{p}_i(\{v_0,\ldots,v_k\})=\{p_i(v_0),\ldots,p_i(v_k)\}$.
To see that $\tilde{p}_i$ is well-defined, note that $\tilde{p}_i(N_r[v])\subseteq N_r[p_i(v)]$ for all $v\in V(G)$.
(We remark that $\tilde{p}_i$ would still be well-defined even if its codomain were the smaller simplicial complex $\N(\I_n, r+1)$.)
Define $\tilde{p}_0 \colon \N(\I_n, r) \to \N(\I_n, r+2)$ to be the inclusion map $\tilde{p}_0=\iota$.

Next, we show that for all $1\le i\le n$, the two maps $\tilde{p}_{i-1}, \tilde{p}_i \colon \N(\I_n, r) \to \N(\I_n, r+2)$ are contiguous.
Two simplicial maps $f,g\colon K\to L$ are \emph{contiguous} if for each simplex $\sigma\in K$, the union $f(\sigma)\cup g(\sigma)$ is a simplex in $L$.
When $f$ and $g$ are contiguous, they induce homotopy equivalent maps on geometric realizations.
We split the verification that $\tilde{p}_{i-1}$ and $\tilde{p}_i$ are contiguous into two cases.
When $r$ is even, for any vertex $v\in V$, we have both
\begin{itemize}
\item $\tilde{p}_i(N_{\frac{r}{2}}[v])\subseteq N_{\frac{r}{2}}[p_i(v)]\subseteq N_{\frac{r+2}{2}}[p_{i-1}(v)]$ since $d(p_{i-1}(v),p_i(v))\le 1$, and
\item $\tilde{p}_{i-1}(N_{\frac{r}{2}}[v])\subseteq N_r[p_{i-1}(v)] \subseteq N_{\frac{r+2}{2}}[p_{i-1}(v)]$.
\end{itemize}
This shows that the maps $\tilde{p}_{i-1},\tilde{p}_i\colon\N(\I_n, r) \to \N(\I_n, r+2)$ are contiguous when $r$ is even.
Similarly, when $r$ is odd, for any edge $(v,w)\in E(G)$, we have both
\begin{itemize}
\item $\tilde{p}_i(N_{\frac{r-1}{2}}[v]\cup N_{\frac{r-1}{2}}[w])\subseteq N_{\frac{r-1}{2}}[p_i(v)]\cup N_{\frac{r-1}{2}}[p_i(w)]\subseteq N_{\frac{r+1}{2}}[p_{i-1}(v)]\cup N_{\frac{r+1}{2}}[p_{i-1}(w)]$ since $d(p_{i-1}(v),p_i(v))\le 1$ and $d(p_{i-1}(w),p_i(w))\le 1$, and
\item $\tilde{p}_{i-1}(N_{\frac{r-1}{2}}[v]\cup N_{\frac{r-1}{2}}[w])\subseteq N_{\frac{r-1}{2}}[p_{i-1}(v)]\cup N_{\frac{r-1}{2}}[p_{i-1}(w)] \subseteq N_{\frac{r+1}{2}}[p_{i-1}(v)]\cup N_{\frac{r+1}{2}}[p_{i-1}(w)]$.
\end{itemize}
Hence the maps $\tilde{p}_{i-1},\tilde{p}_i\colon\N(\I_n, r) \to \N(\I_n, r+2)$ are contiguous.

So for all $1\le i\le n$, the maps $\tilde{p}_{i-1}, \tilde{p}_i$ are contiguous and hence homotopic.
We have the chain of homotopy equivalences $\iota = \tilde{p}_0 \simeq \tilde{p}_1 \simeq \ldots \simeq \tilde{p}_{i-1} \simeq \tilde{p}_n$.
Since $\tilde{p}_n$ is a constant map, this shows that the inclusion $\iota \colon \N(\I_n, r) \hookrightarrow \N(\I_n, r+2)$ is a null-homotopy, as claimed.
\end{proof}

\Cref{thm:persistence} implies that in the persistent (reduced) homology of \v{C}ech complexes of hypercube graphs, no bars in the persistence barcode have length longer than two filtration steps.

We remark that our proof strategy above will not work with the inclusion $\iota \colon \N(\I_n, r) \hookrightarrow \N(\I_n, r+1)$, when $r+2$ in the codomain is replaced by $r+1$.
Indeed, consider the  2-dimensional hypercube $\I_2$ with $n=2$, and let $r=1$ be odd.
The complex $\N(\I_2,1)$ consists of four vertices and four edges, arranged in a square.
Let $v=11$ and $w=10$.
Note that
\[ N_{\frac{r-1}{2}}[v]\cup N_{\frac{r-1}{2}}[w] = N_0[11]\cup N_0[10] = \{11,10\} \]
and so 
\[ \iota\left(N_{\frac{r-1}{2}}[v]\cup N_{\frac{r-1}{2}}[w]\right)\cup\tilde{p}_1\left(N_{\frac{r-1}{2}}[v]\cup N_{\frac{r-1}{2}}[w]\right)=\{11,10,01,00\}.\]
There is no vertex $v\in \I_2$ such that $N_{\frac{r+1}{2}}[v]=N_1[v]$ contains this set, as the complex $\N(\I_2,2)$ is the \emph{boundary} of a tetrahedron.
Therefore the maps $\iota,\tilde{p}_1 \colon \N(\I_n, r) \hookrightarrow \N(\I_n, r+1)$ are not contiguous.
We do not know if the inclusions $\iota \colon \N(\I_n, r) \hookrightarrow \N(\I_n, r+1)$ are always nullhomotopic or not; see \Cref{ques:persistence}.

\section{Conclusion and open questions}
\label{sec:conclusion}

We end with some open questions.
Though determining all of the homotopy types of \v{C}ech complexes of hypercubes ({\it i.e.}\ all of the homotopy types in \Cref{table}) may be a difficult task, we hope these open questions will inspire further progress.

Since $\N(\I_n,1)$ is isomorphic to the graph $\I_n$ itself, the complex $\N(\I_n, 1)$ is a wedge of circles.
Moreover, from \Cref{thm:general_for_index_2}, we know that $\N(\I_n,2)$ is also homotopy equivalent to a wedge of spheres.
Thus the following is a very natural question.
\begin{question}
For any $n\ge 1$ and $r\ge 0$, is the \v{C}ech complex $\N(\I_n,r)$ of the hypercube graph always homotopy equivalent to a wedge of spheres?
\end{question}

From \Cref{thm:general_for_index_2}, $\tilde{H}_i(\N(\I_n, 2)) \neq 0$ only for $i = 2$.
For $r= 3$, in  \Cref{thm:main3}, we showed that 
$\tilde{H}_i(\N(\I_n, r)) \neq 0$ only if $i \in \{3, 4\}$.
Therefore, in view of the \Cref{table}, we propose  the following question.

\begin{question}
When is $\tilde{H}_i(\N(\I_n, r))$ nonzero?
\begin{itemize}
\item For $r = 2k$ even and for $n\ge k+1$, is $\tilde{H}_i(\N(\I_n), r) \neq 0$ if and only if $i \in \{r, 2(2^{k}-1)\}$?
\item For  $r=2k+1$ odd and for $n\ge k+2$, is $\tilde{H}_i(\N(\I_n), r) \neq 0$ if and only if  $i \in \{r, 3\cdot 2^k-2 \}$?
\end{itemize}
\end{question}

Another interesting problem would be to find the collapsibility number of $\N(\I_n,r)$.
Since $\N(\I_n,1)$ is isomorphic to the graph $\I_n$ itself, the collapsibility number of $\N(\I_n, 1)$ is $2$.

\begin{lemma}
For $n \geq 2$, the collapsibility number of $\N(\I_n,2)$ is $3$.
\end{lemma}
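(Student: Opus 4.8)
The plan is to prove the two halves of the statement separately: that the collapsibility number of $\N(\I_n,2)$ is at least $3$, and that it is at most $3$. The lower bound will come from homology, and the upper bound from the minimal‑exclusion‑sequence machinery of \Cref{prop:minimalexclusion}, in close analogy with the proof of \Cref{thm:collapsibility}.

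\emph{Lower bound.} By \Cref{thm:main_for_index_2} we have $\N(\I_n,2)\simeq\bigvee_{2^{n-2}(n^2-3n+4)-1}\bs^2$, and since $2^{n-2}(n^2-3n+4)-1\ge 1$ for all $n\ge 2$, the group $\tilde{H}_2(\N(\I_n,2))$ is nonzero. If $\N(\I_n,2)$ were $2$-collapsible, then \Cref{prop:collapsibilitysubcomplex} would give a homotopy equivalence to a complex of dimension at most $1$, which has trivial second homology — a contradiction. Since $1$-collapsibility implies $2$-collapsibility, $\N(\I_n,2)$ is not $1$-collapsible either, so the collapsibility number is at least $3$.

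\emph{Upper bound.} The structural fact I would isolate first is that $|N_1[u]\cap N_1[v]|\le 2$ for any two distinct vertices $u,v$ of $\I_n$: a common element of $N_1[u]$ and $N_1[v]$ forces $d(u,v)\le 2$ by the triangle inequality, and then a direct check shows the intersection equals $\{u,v\}$ when $d(u,v)=1$ and equals $\{u^i,u^j\}$ when $d(u,v)=2$ (here $\{i,j\}$ is the pair of coordinates in which $u$ and $v$ differ). By \Cref{lem:maximal-simplices}, every maximal simplex of $\N(\I_n,2)$ is of the form $N_1[v]$. Now fix an arbitrary linear order $\prec$ on the maximal simplices; I claim $|M_\prec(\sigma)|\le 3$ for every $\sigma\in\N(\I_n,2)$, which by \Cref{prop:minimalexclusion} shows $\N(\I_n,2)$ is $3$-collapsible. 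Suppose instead $|M_\prec(\sigma)|\ge 4$, let $\gamma_p=N_1[v]$ be the $\prec$-smallest maximal simplex with $\sigma\subseteq\gamma_p$, and write $\mes_\prec(\sigma)=(a_1,\dots,a_{p-1})$. Order the elements of $M_\prec(\sigma)$ by first appearance as $b_1,b_2,b_3,b_4,\dots$, with $b_4$ first appearing at some index $k_4\le p-1$; then the distinct values among $a_1,\dots,a_{k_4-1}$ are exactly $b_1,b_2,b_3$. Since $a_{k_4}=b_4$ does not lie in $\{a_1,\dots,a_{k_4-1}\}$, the value $a_{k_4}$ was not produced by the ``reuse'' clause in the definition of $\mes_\prec$, so $\{b_1,b_2,b_3\}\cap(\sigma\setminus\gamma_{k_4})=\emptyset$; as $b_1,b_2,b_3\in\sigma$ this forces $b_1,b_2,b_3\in\gamma_{k_4}$. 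Writing $\gamma_{k_4}=N_1[u]$ with $u\ne v$ (because $k_4<p$), we obtain $\{b_1,b_2,b_3\}\subseteq N_1[v]\cap N_1[u]$, contradicting $|N_1[u]\cap N_1[v]|\le 2$. Hence $d_\prec(\N(\I_n,2))\le 3$, and the two bounds together give that the collapsibility number is exactly $3$.

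The main obstacle is the bookkeeping in the last argument — specifically, verifying that at the moment the fourth \emph{new} witness $b_4$ enters the minimal exclusion sequence, the three earlier witnesses $b_1,b_2,b_3$ are all simultaneously present in the maximal simplex $\gamma_{k_4}$. Once that is in hand, the contradiction with the intersection bound $|N_1[u]\cap N_1[v]|\le 2$ is immediate; and that intersection bound, together with the homology input from \Cref{thm:main_for_index_2}, is entirely routine.
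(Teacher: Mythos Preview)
Your proof is correct and follows essentially the same approach as the paper: both halves rest on the same two facts, namely $|N_1[u]\cap N_1[v]|\le 2$ for distinct $u,v$ (upper bound via \Cref{prop:minimalexclusion}) and the nonvanishing of $\tilde H_2$ from \Cref{thm:main_for_index_2} together with \Cref{prop:collapsibilitysubcomplex} (lower bound). The only difference is presentational: the paper states the intersection bound, observes that it forces any two distinct maximal simplices to meet in at most two vertices, and then invokes \Cref{prop:minimalexclusion} directly, whereas you spell out the minimal-exclusion-sequence bookkeeping that makes this invocation go through --- your contradiction argument is exactly the missing detail behind the paper's one-line ``Hence''.
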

\begin{proof}
It is easy to see that, for any two vertices $v,w \in \I_n$, $N_1[v]\cap N_1[w]$ has at most $2$ elements.
Therefore, for any two  maximal simplices $\tau, \sigma \in \N(\I_n,2)$, if $|\tau \cap \sigma | \geq 3$, then  $\sigma = \tau$.
Hence, by using \Cref{prop:minimalexclusion}, we conclude that  $\N(\I_n,2)$ is $3$-collapsible.
The result then follows from \Cref{thm:general_for_index_2} and \Cref{prop:collapsibilitysubcomplex}.
\end{proof}

From \Cref{thm:collapsibility}, we know that $\N(\I_n,3)$ is 5-collapsible.
In this direction, we ask the following.

\begin{question}
What is the collapsibility number of $\N(\I_n,r)$?
\begin{itemize}
\item For $r=2k$ even and for $n\ge k+1$, is the collapsibility number of $\N(\I_n,2k)$ equal to $2^{k+1}-1$? 
\item For $r=2k+1$ odd and for $n\ge k+2$, is the collapsibility number of $\N(\I_n,2k+1)$ equal to $3\cdot 2^k-1$?
\end{itemize}
\end{question}

\begin{question}
\label{ques:persistence}
In \Cref{thm:persistence}, we prove that the inclusion $\iota \colon \N(\I_n, r) \hookrightarrow \N(\I_n, r+2)$ is homotopic to a constant map.
Are the inclusions $\iota \colon \N(\I_n, r) \hookrightarrow \N(\I_n, r+1)$ also always null-homotopies?
\end{question}

\section*{Acknowledgments}

We would like to thank Micha{\l} Adamaszek and \v{Z}iga Virk for helpful conversations.
Henry Adams is supported by a Simons Foundation's progarm Travel Support for Mathematicians.
Anurag Singh is supported by the Start-up Research Grant SRG/2022/000314 from SERB, DST, India.

% \section*{Conflict of interest}
% On behalf of all authors, the corresponding author states that there is no conflict of interest.

\bibliographystyle{abbrv}
\bibliography{CechComplexesOfHypercubes}

\end{document}